\newtheorem{thm}{Theorem}[section]
\newtheorem{prop}[thm]{Proposition}
\newtheorem{lem}[thm]{Lemma}
\newtheorem{conj}[thm]{Conjecture}
\newtheorem{question}[thm]{Question}
\newcommand{\Text}[1]{\text{\textnormal{#1}}}
\theoremstyle{remark}
\newtheorem{remark}[thm]{Remark}
\theoremstyle{definition}
\begin{document}

\title{Lower Bound for Dilatations \footnote{Partially supported by NSF Grants DMS-1006553 and DMS-1607374.}}
\author{Mehdi Yazdi }
\date{}

\begin{abstract} We prove a new lower bound for the dilatation of an arbitrary pseudo-Anosov map on a surface of genus $g$ with $n$ punctures. Our bound improves the former super-exponential dependence on the genus by a polynomial dependence.
\end{abstract}

\maketitle

\section{Introduction}
Let $S = S_{g,n}$ be a surface of genus $g$ with $n$ punctures, where $\chi(S)=2-2g-n<0$. The mapping class group of $S$, $Mod(S_{g,n})$, is the group of orientation preserving homeomorphisms of $S$ up to isotopy. Here, the punctures are assumed to be fixed setwise by the homeomorphism. Nielsen-Thurston classification of mapping classes states that every mapping class is either pseudo-Anosov, reducible or finite order \cite{thurston1988geometry}. Pseudo-Anosovs are often the ones whose understanding is the crucial part in studying the mapping class group. 

Associated to any pseudo-Anosov map is an algebraic integer called the dilatation or the stretch factor. The dilatation measures how much the map stretches/shrinks in the two canonical directions at each point of the surface. From a dynamical point of view, the logarithm of the dilatation is the entropy of the pseudo-Anosov map. Ivanov proved that on a fixed surface, the set of dilatations is a discrete subset of $(1,\infty)$ \cite{ivanov1990stretching,arnoux1981construction}. In particular there exists a minimum dilatation. Let us denote by $l_{g,n}$ the logarithm of the minimum dilatation for pseudo-Anosov maps on $S_{g,n}$. Finding the minimum dilatation or its asymptotic behavior has been of great importance. One motivation is that $l_{g,n}$ is the systole (the length of the shortest geodesic) of the moduli space with the Teichm\"{u}ller metric. Another motivation comes from the relation between low-dilatation pseudo-Anosov maps and low-volume fibered hyperbolic 3-manifolds \cite{aaber2010closed}. Penner found the asymptotic behavior of this number for closed surfaces \cite{penner1991bounds}. He proved that there are constants $c_1 , c_2 >0$ such that for any $g\geq 2$

\[ \frac{c_1}{g} \leq l_{g,0} \leq \frac{c_2}{g}. \]

\noindent Our aim is to understand the asymptotic behavior of $l_{g,n}$ similarly. Recall that Penner has proved the following \cite{penner1991bounds}
\[ l_{g,n} \geq \dfrac{\log(2)}{12g-12+4n}. \]
which is comparable to $\dfrac{1}{|\chi(S)|}$, up to multiplicative constants. 
Tsai has obtained another lower bound for $l_{g,n}$, which gives a better bound than Penner's theorem when $n$ is large compared to $g$ \cite{tsai2009asymptotic}. Let $\Gamma_S(3)$ denote the kernel of the action of $Mod(S_{g,0})$ on $H_1(S_{g,0}\medskip;\smallskip\mathbb{Z}/3\mathbb{Z})$. Define 
\[ \Theta(g):=[Mod(S):\Gamma_S(3)]. \]
Note that $\Theta(g)$ is super-exponentially large in $g$ \cite{tsai2009asymptotic}\footnote{In fact standard theorems imply that it is larger than $3^{g^2}$. See the background section.}.

\begin{thm}(Tsai)
For any $g \geq 2$ and $n \geq 0$ we have the following:
\[ l_{g,n} \geq  \min \left\{ \frac{1}{\Theta(g)}\frac{\log(2)}{(12g-12)}, \frac{1}{\Theta(g)} \frac{\log(3|\chi(S)|)}{6|\chi(S)|} \right\}.\]
\label{tsai}
\end{thm}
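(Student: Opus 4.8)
The plan is to read the hypothesis on $\Theta(g)$ as a uniform bound on the power of $f$ one is allowed to take, and then to run a Penner-style train-track argument on the resulting ``stabilized'' map. Let $f$ be pseudo-Anosov on $S=S_{g,n}$ with dilatation $\lambda$; we must bound $\log\lambda$ from below. First I would apply the ``fill in the punctures'' homomorphism $Mod(S_{g,n})\to Mod(S_{g,0})$ coming from the Birman exact sequence, writing $\bar f$ for the image of $f$. Since $Mod(S_{g,0})/\Gamma_S(3)$ is a finite group of order $\Theta(g)$, Lagrange's theorem gives $\bar f^{\,\Theta(g)}\in\Gamma_S(3)$. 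Put $h=f^{\Theta(g)}$: it is pseudo-Anosov on $S_{g,n}$ with dilatation $\lambda_h=\lambda^{\Theta(g)}$, and its image $\bar h=\bar f^{\,\Theta(g)}$ lies in $\Gamma_S(3)$. As $\log\lambda=\tfrac{1}{\Theta(g)}\log\lambda_h$, the theorem is equivalent to the $\Theta(g)$-free inequality $\log\lambda_h\ \geq\ \min\bigl\{\tfrac{\log 2}{12g-12},\ \tfrac{\log(3|\chi(S)|)}{6|\chi(S)|}\bigr\}$, which is what I would prove.

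Next I would put $h$ into normal form. Because $\bar h\in\Gamma_S(3)$ acts trivially on $H_1(S_{g,0};\mathbb Z/3\mathbb Z)$, one may use the standard fact — this is why the modulus $3$ appears — that such an element fixes each singularity of its invariant foliations and each separatrix at each singularity (no nontrivial permutation of these survives in $H_1(\,\cdot\,;\mathbb Z/3\mathbb Z)$). With singularities and separatrices pinned down, $h$ is carried by an invariant train track $\tau$ with at most $6|\chi(S)|$ branches whose transition matrix $M$ is a nonnegative integer matrix, primitive on its essential part, with Perron eigenvalue $\lambda_h$. Now I would split on the local picture at the punctures. \emph{Case 1: no puncture of $S$ is a one-prong singularity of $h$.} Then filling the punctures turns the invariant foliations into honest measured foliations on the closed surface $S_{g,0}$ and makes $\bar h$ a pseudo-Anosov map of $S_{g,0}$ with the same dilatation $\lambda_h$; Penner's lower bound for closed surfaces then gives $\log\lambda_h\geq\tfrac{\log 2}{12g-12}$, the first term (in fact here $\bar f$ itself is already pseudo-Anosov, so the power is not even needed). \emph{Case 2: some puncture is a one-prong.} Then the ``active'' part of $\tau$ is concentrated in the cusped regions around the punctures, of which there are at least one and generically $\asymp n$; the corresponding principal block $B$ of $M$ has size $k$ comparable to $|\chi(S)|$ and, since $h$ is pseudo-Anosov, $B$ is irreducible and not a permutation matrix.

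The crux — and the step I expect to be the main obstacle — is the quantitative estimate in Case 2: I must show that a block $B$ as above has Perron eigenvalue at least roughly $(3k/2)^{1/k}$, so that with $k\le 6|\chi(S)|$ one obtains $\lambda_h^{\,6|\chi(S)|}\ge 3|\chi(S)|$, the second term. The naive ``one extra branch'' estimate only yields $\lambda_h^{\,6|\chi(S)|}\ge 2$, so the improvement genuinely needs that a pseudo-Anosov point-push of $n$ punctures must \emph{mix}, and not merely cyclically permute, \emph{all} of the $\asymp n$ cusp regions: each of the $k$ rows of $B$ must carry strictly more weight than a permutation allows, and chaining these contributions across the irreducible block forces $\operatorname{tr}(B^{k})$ (or a similar symmetric function of $B$) up to size $\asymp k$. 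I would try to make this precise either by a direct counting argument on reduced carried paths — bounding below the number of cusps that a single $h$-image edge must sweep across — or, alternatively, by inducting on $n$ through the Birman exact sequence, peeling off one puncture at a time while tracking the joint change in $\lambda_h$ and $|\chi(S)|$, with the closed surface (Case 1) as base case. The delicate points I anticipate are: verifying the uniform lower bound $\asymp|\chi(S)|$ for the number of cusp regions that must participate; checking that ``all separatrices fixed'' is both a genuine consequence of $\bar h\in\Gamma_S(3)$ and strong enough to exclude the near-permutation blocks that would otherwise wipe out the $\log(3|\chi(S)|)$ gain; and making the constant land at exactly $3$.
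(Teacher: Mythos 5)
The reduction to $h=f^{\Theta(g)}$ via the forgetful map and Lagrange, and the observation that $\bar h\in\Gamma_S(3)$ fixes each singularity and separatrix, are both correct and are indeed the first moves in Tsai's argument. Your Case~1 (fill in the punctures, apply Penner's closed-surface bound) is also fine and yields the first term of the minimum. The gap is in Case~2, and it is exactly where you flag it. What you need there is $\rho(B)^{k}\gtrsim k$ for a $k\times k$ irreducible nonnegative integer block $B$ with $k\le 6|\chi(S)|$; the ``one extra branch'' observation (no row is a unit vector) only gives $\rho(B)^{k}\ge 2$, losing the whole $\log|\chi(S)|$ gain, and the ``mixing across cusp regions'' heuristic you sketch does not, as written, force row sums of a power of $B$ to grow linearly in $k$. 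In particular nothing in your outline produces a positive \emph{diagonal} entry of the transition matrix, which is the actual engine of the estimate.

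The missing idea is precisely Lemma~\ref{negative-lefschetz} of the paper, which you never invoke. Tsai does not try to bound the Perron eigenvalue of a train-track block directly. Instead: the separatrix-fixing property from $\bar h\in\Gamma_S(3)$ is used \emph{quantitatively}, via the Lefschetz fixed-point formula, to conclude $L(h)=L(\bar h)<0$ (every fixed point of $\bar h$ has local index $1-p\le -1$ once its separatrices are fixed, and pseudo-Anosovs always have fixed points). A negative Lefschetz number forces some local index to be negative, and the case analysis in the proof of Lemma~\ref{negative-lefschetz} shows such a fixed point lies in a Bestvina--Handel rectangle $\mathcal R$ with $h(\mathcal R)\cap\mathcal R\neq\emptyset$, i.e., the Markov transition matrix $A$ has a positive diagonal entry $a_{\ell\ell}>0$. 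Since $A$ is Perron--Frobenius of size $k\le 3|\chi(S)|$, every pair of vertices is joined by a path of length exactly $2k$ through $v_\ell$, so $A^{2k}$ is strictly positive and hence has all row sums at least $k$; therefore $\rho(A)^{2k}\ge k$, which is $\log\lambda(h)\ge\frac{\log k}{2k}\ge\frac{\log(3|\chi(S)|)}{6|\chi(S)|}$. That diagonal-entry step — from ``no row is a permutation row'' to ``the power $A^{2k}$ is everywhere positive'' — is the content you are missing, and it is what upgrades $\log 2$ to $\log(3|\chi(S)|)$. You already have the crucial hypothesis ($\bar h\in\Gamma_S(3)$ fixes separatrices); you just need to run it through $L(h)$ and the Markov partition rather than through an ad hoc count of cusp regions.

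A secondary point: even your Case~1/Case~2 split is slightly off from Tsai's. The paper's argument does not split on whether some puncture is a one-prong; it splits on whether $\bar h$ is pseudo-Anosov versus reducible or periodic on $S_{g,0}$, because $\bar h$ can fail to be pseudo-Anosov even when every puncture of $h$ is a regular (two-prong) point. You would need to address that case as well, which in Tsai's treatment is again folded into the Lefschetz computation rather than handled by a separate train-track estimate.
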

\noindent Note that when $n$ is large compared to $g$, the minimum is the second expression. The following theorem shows that one can replace $\Theta(g)$ in Tsai's theorem by a term that is polynomially small in $g$.

\begin{thm}
Given any positive real number $\alpha$, there exists a positive constant $C = C(\alpha)$ such that for any $g \geq 2$ and $n \geq 0$ we have the following:
\[ l_{g,n} \geq \frac{C}{g^{2+\alpha}}\frac{\log(|\chi(S)|)}{|\chi(S)|}. \]
\label{maintheorem}
\end{thm}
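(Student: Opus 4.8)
The plan is to follow the architecture of Tsai's argument (Theorem~\ref{tsai}), in which, roughly, the super-exponential quantity $\Theta(g)$ enters only as the cost of passing from an arbitrary pseudo-Anosov map $\phi$ on $S_{g,n}$ to a power $\phi^{k}$ that is ``tame'' on homology, and then to replace this by a power with $k$ bounded by a polynomial in $g$ alone. Throughout, let $\phi$ be a pseudo-Anosov representative realizing $l_{g,n}$, with dilatation $\lambda=e^{l_{g,n}}$ and invariant measured foliations $\mathcal{F}^{s},\mathcal{F}^{u}$ whose singular set $\Sigma$ includes the $n$ punctures.

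First I would dispose of the range in which the complexity is not large compared with the genus. Fix the threshold $|\chi(S)|\leq g^{2+\alpha}$: on this range Penner's inequality $l_{g,n}\geq \tfrac{\log 2}{12g-12+4n}$ already dominates $\tfrac{C}{g^{2+\alpha}}\cdot\tfrac{\log|\chi(S)|}{|\chi(S)|}$ once $C=C(\alpha)$ is small enough, since $12g-12+4n=O(|\chi(S)|)$ and $\log|\chi(S)|=O(\log g)$ there; the inequality $g^{2+\alpha}\geq \text{const}(\alpha)\cdot\log g$ that this requires holds for all $g\geq 2$ after shrinking $C$. So from now on I may assume $n$, hence $|\chi(S)|$, is large compared with $g$.

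The core step is to produce a power $\psi=\phi^{k}$ with $k\leq P(g)$ for a fixed polynomial $P$ (ideally of degree arbitrarily close to $2$, to match the exponent $2+\alpha$) for which the Perron--Frobenius estimate underlying Tsai's theorem applies. I would isolate the obstructions that genuinely cost more than a bounded factor and argue that all of them are governed by $g$ and not $n$: (i) orientability of the foliations together with preservation of their transverse orientation costs only a bounded factor, after passing to the canonical branched double cover $\widetilde{S}\to S$ on which the lift $\widetilde{\mathcal{F}}^{u}$ is orientable, so that $\lambda^{k}$ equals the spectral radius of the lift $\widetilde{\psi}_{*}$ on $H_{1}(\widetilde{S};\mathbb{Z})$; (ii) the part of the action on $H_{1}(\widetilde{S},\widetilde{\Sigma};\mathbb{Z})$ induced by the permutation of punctures contributes only eigenvalues that are roots of unity and is harmless; (iii) the remaining obstruction lives in the action on $H_{1}(\overline{S}_{g};\mathbb{Z})\cong\mathbb{Z}^{2g}$, where instead of trivializing the reduction mod $3$ --- which is precisely what forces the factor $\Theta(g)=[\,Mod(S_{g,0}):\Gamma_{S}(3)\,]$ in Tsai's bound --- it suffices to kill the finite-order and unipotent parts, and these act on a lattice of rank only $2g$: a unipotent block of size $2g$ over $\mathbb{Z}$ becomes trivial mod $3$ after its $O(\log g)$-th cube, and a finite-order block has order at most the largest $N$ with $\varphi(N)\leq 2g$, which is $g^{1+o(1)}$. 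Upgrading the resulting bound on $k$ from the sub-exponential $e^{O(\sqrt{g\log g})}$ that a crude Minkowski/Landau estimate would give (when several such blocks with incommensurable orders are present) to a genuine polynomial $g^{2+o(1)}$ --- using, if necessary, the hypothesis that $\phi$ realizes the minimum dilatation to rule out the conspiring configurations --- is the step I expect to be the main obstacle.

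Finally, with $\psi=\phi^{k}$ so normalized, I would run the train-track estimate: an invariant train track for $\widetilde{\psi}$ has primitive nonnegative integral transition matrix $M$ with Perron root $\lambda^{k}$ and size $O(|\chi(S)|)$, while $\widetilde{\psi}$ must permute on the order of $|\chi(S)|$ distinguished combinatorial objects (the punctures, equivalently the cusps of the train track) in cycles of length at most $|\chi(S)|$; comparing the count of the resulting periodic objects, which is $\gtrsim |\chi(S)|$, with the trace estimates $\operatorname{tr}(M^{m})\leq (\text{size})\cdot(\lambda^{k})^{m}$ yields $(\lambda^{k})^{O(|\chi(S)|)}\geq c\,|\chi(S)|$, hence
\[
l_{g,n}=\log\lambda \;\geq\; \frac{\log\bigl(c\,|\chi(S)|\bigr)}{O\bigl(k\,|\chi(S)|\bigr)}\;\geq\;\frac{C}{g^{2+\alpha}}\cdot\frac{\log|\chi(S)|}{|\chi(S)|}
\]
for a suitable $C=C(\alpha)$, once the bound $k\leq g^{2+\alpha}$ from the previous step is in hand. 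The remaining bookkeeping --- verifying that the genus of $\widetilde{S}$, which may be as large as $O(|\chi(S)|)$, only multiplies the $|\chi(S)|$ in the exponent and never the polynomial-in-$g$ factor --- is routine.
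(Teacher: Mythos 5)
Your high-level architecture (pass to a power $\phi^{k}$ with $k$ polynomially bounded in $g$ for which a Lefschetz/Perron--Frobenius estimate applies, then divide the resulting entropy bound by $k$) matches the paper's, and the reduction to the case $|\chi(S)|$ large relative to $g$ is fine. But the central step --- producing such a $k$ --- is where your approach breaks down, and you have in fact flagged the gap yourself.

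Your strategy is to ``normalize'' the action of $\phi_*$ on $H_1$, killing off the unipotent and finite-order parts so that the remaining action becomes controllable. As you correctly anticipate, the finite-order parts present an essential obstruction: an element of $SL(2g,\mathbb{Z})$ of finite order can have order as large as the Landau function evaluated near $2g$, which is $e^{(1+o(1))\sqrt{2g\log 2g}}$, and there is no reason for the finite-order blocks of $\phi_*$ to have commensurable orders. There is no mechanism visible for the hypothesis ``$\phi$ realizes $l_{g,n}$'' to rule out such configurations --- the Lefschetz number depends only on $\phi_*\in Sp(2g,\mathbb{Z})$, and minimality of dilatation puts no a priori constraint on which conjugacy class in $Sp(2g,\mathbb{Z})$ arises. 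So your plan as stated bottoms out at a sub-exponential bound, which is not good enough.

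The paper's key idea, which your proposal misses, is to demand much less: one does not need $\phi_*^{k}$ to be tame (unipotent, or trivial mod $3$ as in Tsai); one only needs a \emph{single} exponent $\nu$ with $\operatorname{Tr}(\phi_*^{\nu})>2$, since that already forces $L(f^{\nu})<0$ and triggers Lemma~\ref{negative-lefschetz}. This is a dramatically weaker requirement, and the paper shows (Proposition~\ref{main}, split into Propositions~\ref{case1} and~\ref{case2}) that for \emph{any} $A\in SL(m,\mathbb{Z})$ such a $\nu\leq m^{2+\epsilon}$ exists. When $\rho(A)>1$ this is a Tur\'an-type power-sum argument with a Fej\'er kernel, seeded by the Dobrowolski/Schinzel--Zassenhaus lower bound $\overline{|\lambda|}\geq 1+c/(d\log^{3}d)$; when $\rho(A)=1$ the characteristic polynomial is a product of cyclotomics by Kronecker's theorem, and an elementary argument (Lemma~\ref{lastlemma}) on power sums of roots of real polynomials suffices. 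None of this requires killing blocks simultaneously, and that is precisely why the exponent is polynomial rather than sub-exponential. Separately, your concluding ``train-track'' paragraph would need to be replaced by the precise Markov-partition argument of Lemma~\ref{negative-lefschetz}: a negative local Lefschetz index forces a rectangle $\mathcal{R}$ with $\mathcal{R}\cap f(\mathcal{R})\neq\varnothing$, hence a positive diagonal entry in the transition matrix, hence $A^{2k}>0$ and $\log\mu(A)\geq \log(k)/(2k)$; the vague ``count of periodic objects'' comparison you sketch is not a substitute for this.
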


\noindent Our lower bound should be compared with Tsai's upper bound for $l_{g,n}$ \cite{tsai2009asymptotic}. Tsai proved that there is a constant $C>0$ such that for any $g \geq 2$ and $n \geq 0$ the following holds \footnote{Tsai proved this bound for $n \geq 12g+7$. See the appendix for an extension of her result to all $n \geq 0$.}: 
\[ \l_{g,n} \leq C \medskip g \medskip \frac{\log|\chi(S)|}{|\chi(S)|}. \]

Here is the idea of the proof. Following Tsai, we look at the Lefschetz number of the map $f : S \longrightarrow S$. If the Lefschetz number of $f$ is negative then one can give a 'good' lower bound for the dilatation. However, the Lefschetz number of a pseudo-Anosov map need not to be negative in general. We prove that there is a 'relatively small' number (at most polynomially large in genus), $k$, such that the Lefschetz number of $f^k$ is negative. Using the Lefschetz formula for the Lefschetz number of a map, this translates into a problem about traces of powers of integral matrices. Then we use elementary Fourier analysis and Dobrowolsky's theorem about modulus of algebraic integers to prove the desired statement. 
\subsection{Acknowledgement}
This work has been done during my PhD studies at Princeton University. I would like to thank my advisor David Gabai for his constant support and encouragement. Special thanks to Peter Sarnak for helpful discussions on Turan theory and suggesting the reference \cite{montgomery1994ten}. I would like to thank Ian Agol and Will Sawin for helpful comments and Bal\'{a}zs Strenner and Masoud Zargar for reading an earlier version of this paper. 

\section{Background}
\noindent Throughout, we assume that the surface $S$ is orientable and $\chi(S) = 2-2g-n<0$.
\subsection{Thurston-Nielsen Theory}\hfill\\

Thurston-Nielsen classification of mapping class group states that each element in the mapping class group can be represented by a map $f$ that is one of the following:\\
1) periodic \\
2) reducible \\
3) pseudo-Anosov\\
\textbf{Periodic} means that $f$ has a power that is equal to identity. \textbf{Reducible} means that there is a collection $\mathcal{C}$ of disjoint simple closed curves on $S$ that is preserved by $f$, i.e., $f(\mathcal{C})=\mathcal{C}$. \textbf{Pseudo-Anosov} means that there is a pair of transverse measured foliations $\mathcal{F}^\pm$ on $S$ and a positive number $\lambda > 0$ such that the foliations are preserved by $f$ but their measures are expanded/contracted by a factor of $\lambda > 1$, i.e., $f(\mathcal{F}^+)=\lambda \mathcal{F}^+$ and  $f(\mathcal{F}^-)=\frac{1}{\lambda} \mathcal{F}^-$. The foliations $\mathcal{F}^\pm$ might have prong-type singularities (Figure \ref{prongs}). The number $\lambda$ is called the \textbf{dilatation} or \textbf{stretch factor} of $f$.\\
\begin{figure}
\centering
\includegraphics[width= 3 in]{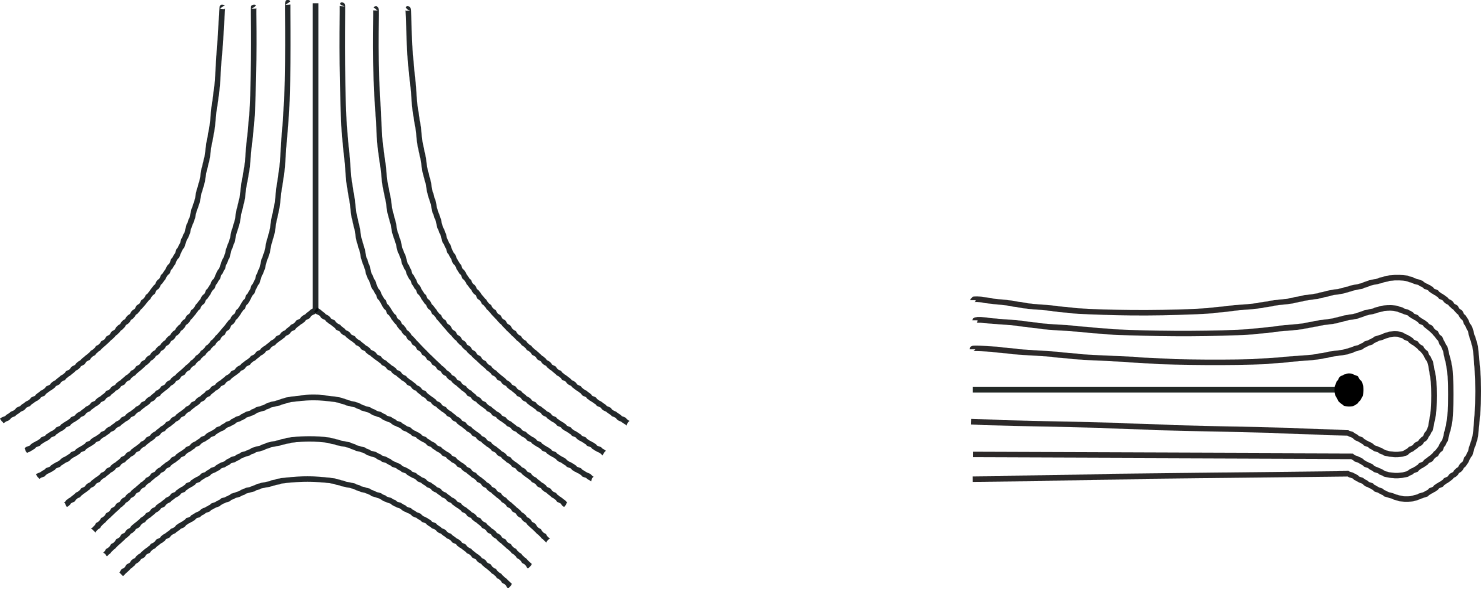}
\caption{Left: A 3-prong singularity, Right: We allow 1-prong singularities around the punctures.}
\label{prongs}
\end{figure}

\subsection{Previous bounds for dilatations}\hfill\\

Penner originated the study of minimal dilatations for orientable surfaces. He proved that $l_{g,0}$ behaves asymptotically like $\frac{1}{g}$. He also gave a lower bound of the order $\frac{1}{|\chi(S)|}$ for the value of $l_{g,n}$ \cite{penner1991bounds}. Since then, there has been a lot of effort for understanding the minimum stretch factor from at least two different perspectives. \\
The first one tries to make the constants in Penner's original theorem sharp, for small values of $g$ or asymptotically. McMullen's question is in this direction \cite{mcmullen2000polynomial}. 
\begin{question}(McMullen)
Does $\lim_{g \rightarrow \infty} \hspace{2mm} g \hspace{1mm}. \hspace{1mm} l _{g,0}$ exists? What is its value? 
\end{question}
\noindent There has been a lot of progress in finding upper bounds for $g \hspace{1mm}. \hspace{1mm} l _{g,0}$ \cite{bauer1992upper,minakawa2006examples}. The lower bound seems to be much more difficult (see the work of McMullen \cite{mcmullen2015entropy}). \\
The second direction seeks for understanding the behavior of $l_{g,n}$ along different subsets of the $(g,n)$ plane, at least up to multiplicative constants. Theorem \ref{tsai} is of this form. It implies that the behavior of $l_{g,n}$ along the line $\underline{ g = \Text{Constant}}$ is like $\frac{\ln(n)}{n}$ when $g \geq 2$. Valdivia showed for any fixed $r \in \mathbb{Q}^+$, the behavior of $l_{g,n}$ along the line $\underline{ g = rn }$ is like $\frac{1}{g}$ (which is the same behavior as $\frac{1}{n}$ in this case) \cite{valdivia2012sequences}, i.e., for any $r \in \mathbb{Q}$ theres are constants $D_1 = D_1(r)$ and $D_2= D_2(r)$ such that for any $n \in \mathbb{N}$ and $g =rn$ we have
\[ \frac{D_1}{g} \leq l_{g,n} \leq \frac{D_2}{g}\]
It is tempting to understand the behavior of $l_{g,n}$ as a two variable function.
\begin{question}
What is the behavior of $l_{g,n}$ as a function of two variables in the $(g,n)$ plane?
\end{question}
\subsection{Markov Partition}\hfill\\
Let $f : S \longrightarrow S$ be a pseudo-Anosov map with invariant measured foliations $\mathcal{F}^+$ and $\mathcal{F}^-$. A rectangle is a map $\phi : I \times I \longrightarrow S$ such that $\phi$ is an embedding when restricted to the interior of $I \times I$. Moreover, $\phi( \Text{point} \times I) \subset \mathcal{F}^+$ and $\phi(I \times \Text{point}) \subset \mathcal{F}^-$. Define the $\pm$ boundary of $\phi$ as $\partial_+ = \phi( \partial I \times I)$ and $\partial_- = \phi(I \times \partial I)$ (Figure \ref{rectangle}). We usually do not distinguish between a rectangle and its image, $\mathcal{R}$, by abuse of notation.
\begin{figure}
\labellist
\pinlabel $\mathcal{R}$ at 115 55
\pinlabel $\partial_+$ at -10 55
\pinlabel $\partial _+$ at 240 55
\pinlabel $\partial_-$ at 115 115
\pinlabel $\partial_-$ at 115 -10
\endlabellist
\centering 
\includegraphics[width= 2 in]{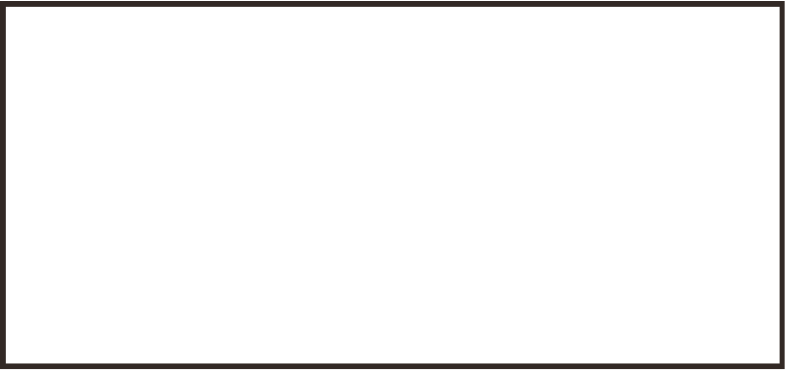}
\caption{A rectangle}
\label{rectangle}
\end{figure}
\noindent A \textbf{Markov partition for $f$} is a finite family of rectangles $\{ \mathcal{R}_i \}$ that cover the whole surface and satisfy the following three conditions.\\
i) The rectangles do not intersect in the interior. \\
ii) For each rectangle $\mathcal{R}_i$, $f(\partial_+ \mathcal{R}_i) \subset \bigcup \limits_{j} \partial_+ \mathcal{R}_j $.\\
iii) For each rectangle $\mathcal{R}_i$, $f^{-1}(\partial_- \mathcal{R}_i) \subset \bigcup \limits_{j} \partial_- \mathcal{R}_j $.\\
Any pseudo-Anosov map has a Markov partition. Bestvina-Handel have constructed a Markov partition of size at most $9 |\chi(S)|$ for $f$ when the surface is closed and a Markov partition of size at most $3|\chi(S)|$ when the surface has at least one marked point. Define the transition matrix $A=(a_{i,j})$ associated to the Markov partition as follows. The entry $a_{i,j}$ counts the number of times that $f(\mathcal{R}_i)$ wraps around $\mathcal{R}_j$. Bestvina-Handel showed that this matrix can be chosen to be Perron-Frobenius. Moreover, its maximal eigenvalue is equal to the dilatation of $f$. In particular, $\lambda(f)$ is an algebraic integer.
\subsection{Lefschetz number}\hfill\\

Let $M$ be a compact, oriented manifold and $f : M \longrightarrow M$ be a map. The \textbf{Lefschetz number of $f$}, $L(f)$, is defined as the algebraic intersection of the graph of $f$ and the diagonal inside $M \times M$. Therefore it is invariant under homotopy of the map $f$. The \textbf{Lefschetz formula} states that this number can be computed in two different ways. On one hand, it is equal to the following sum coming from the action of $f$ on homology groups of $M$:
\[ \sum_{i\geq0} (-1)^i \hspace{2mm} Tr(f_* : H_i (M;\mathbb{R}) \longrightarrow H_i(M;\mathbb{R})) \]
On the other hand when $f$ has isolated fixed points, the Lefschetz number of $f$ is equal to sum of the local Lefschez numbers at fixed points. If $p$ is an isolated fixed point of $f$, then the local Lefschetz number of $f$ at $p$, $L_p(f)$, is defined as follows. Take a small sphere, $U$, around $p$ that contains no other fixed point. Then $L_p(f)$ is equal to the degree of the map $z \mapsto \frac{f(z)-z}{|f(z)-z|}$ restricted to $U$.

\[ L(f) = \sum_{f(p)=p} \hspace{2mm} L_p(f) = \sum_{i\geq0} (-1)^i \hspace{2mm} Tr(f_* : H_i (M;\mathbb{R}) \longrightarrow H_i(M;\mathbb{R})). \]
Note that when $M=S$ is a compact orientable surface, the above formula simplifies to the following:
\[ L(f) =\sum_{f(p)=p} \hspace{2mm} L_p(f)= 2 - Tr(f_* : H_1(S) \longrightarrow H_1(S)). \]
The following crucial observation is due to Tsai. We bring the proof from \cite{tsai2009asymptotic} for the reader's convenience. 

\begin{lem}(Tsai) Let $f : S \longrightarrow S$ be a pseudo-Anosov map on a surface with at least one marked point. Assume that $L(f) < 0$. We have the following estimate for the stretch factor of $f$. 
\[ \log(\lambda(f)) \geq \frac{\log(3|\chi(S)|)}{6|\chi(S)|}.  \]
\label{negative-lefschetz}
\end{lem}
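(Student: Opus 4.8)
The plan is to reconstruct Tsai's argument: squeeze information out of the \emph{fixed point side} of the Lefschetz formula and feed it into the Perron--Frobenius structure of a Markov partition. Fix a Markov partition $\{\mathcal{R}_1,\dots,\mathcal{R}_N\}$ for $f$ of size $N\le 3|\chi(S)|$ (Bestvina--Handel, using that $S$ has a marked point) with transition matrix $A=(a_{ij})$, which is Perron--Frobenius with leading eigenvalue $\lambda(f)$. Recall that the local Lefschetz index of a pseudo-Anosov map at a fixed point $p$ is: $-1$ at a regular fixed point whose prongs are preserved, $1-k$ at a $k$-prong singular fixed point ($k\ge 2$) whose prongs are all preserved, and $+1$ whenever $f$ permutes the prongs at $p$ by a nontrivial rotation (and $0$ at a fixed $1$-prong puncture). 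In particular $L_p(f)\le 1$ always, so $L(f)=\sum_{f(p)=p}L_p(f)<0$ forces a fixed point $p^{\ast}$ with $L_{p^{\ast}}(f)\le -1$; at such a $p^{\ast}$ the map $f$ does \emph{not} rotate the prongs, so $f$ preserves an unstable separatrix issuing from $p^{\ast}$ and expands along it by $\lambda(f)>1$.

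The next step is to turn this into a positive diagonal entry of $A$. A rectangle $\mathcal{R}_k$ meeting $p^{\ast}$ then has $f(\mathcal{R}_k)$ overflowing $\mathcal{R}_k$ in the unstable direction near $p^{\ast}$ (because $f$ fixes $p^{\ast}$ and its preserved unstable separatrix and expands along it), so $f(\mathcal{R}_k)$ crosses $\mathcal{R}_k$ and $a_{kk}\ge 1$. (If $N=1$ then $A=(\lambda(f))$ with $\lambda(f)\in\mathbb{Z}_{\ge 2}$ and we are done, so assume $N\ge 2$.) Since $A$ is irreducible, the vertex $k$ lies on a simple cycle of some length $\ell$ with $2\le \ell\le N$; the principal submatrix of $A$ on the vertices of this cycle, with the loop at $k$ kept, dominates entrywise the adjacency matrix of ``$\ell$-cycle plus one loop'', whose characteristic polynomial is $x^{\ell}-x^{\ell-1}-1$. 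By monotonicity of the spectral radius under entrywise domination, $\lambda(f)\ge \rho_0(\ell)$, the largest root of $x^{\ell}-x^{\ell-1}-1$; equivalently, the closed walks at $k$ obtained by concatenating the loop and the cycle give $(A^{n})_{kk}\ge c_n$ with $c_n=c_{n-1}+c_{n-\ell}$, of exponential growth rate $\rho_0(\ell)$.

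It remains to estimate $\rho_0$. From $\rho_0(\ell)^{\ell-1}\bigl(\rho_0(\ell)-1\bigr)=1$ and the fact that $t\mapsto t^{\ell-1}(t-1)$ increases on $(1,\infty)$, one sees that $\rho_0$ decreases in $\ell$; and comparing $\rho_0(\ell)$ with $\tfrac{\sqrt{\ell}}{\sqrt{\ell}-1}$ (via Bernoulli's inequality) gives $\rho_0(\ell)^{2\ell}\ge \ell$. Since $2\le \ell\le N\le 3|\chi(S)|$, monotonicity yields $\lambda(f)\ge \rho_0\bigl(3|\chi(S)|\bigr)$, hence
\[
\lambda(f)^{\,6|\chi(S)|}\ \ge\ \rho_0\bigl(3|\chi(S)|\bigr)^{\,2\cdot 3|\chi(S)|}\ \ge\ 3|\chi(S)|,
\]
which on taking logarithms is exactly the claimed inequality $\log\lambda(f)\ge \dfrac{\log(3|\chi(S)|)}{6|\chi(S)|}$.

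The step I expect to be the real obstacle is extracting the positive diagonal entry $a_{kk}$ from the negative index at $p^{\ast}$: one must be careful when $p^{\ast}$ is a singularity (hence a corner of the partition, lying on $\partial\mathcal{R}_k$) and when an unstable separatrix of $p^{\ast}$ runs along a $\partial_+$-edge rather than into a rectangle's interior. Everything afterwards --- the Perron--Frobenius monotonicity and the elementary bound on $\rho_0$ --- is routine. Note that the hypothesis $L(f)<0$ is essential: a pseudo-Anosov map all of whose fixed points have rotated prongs has $\mathrm{tr}(A)=0$, and such maps include the known minimal-dilatation examples, for which no bound of this order can hold.
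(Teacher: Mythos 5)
Your proof is correct and follows Tsai's strategy for the first half: a negative Lefschetz number forces a fixed point $p^{\ast}$ with $L_{p^{\ast}}(f)\le -1$, which can only happen when $f$ does not rotate the prongs at $p^{\ast}$, and this produces a positive diagonal entry $a_{kk}\ge 1$ in the transition matrix (with the same delicate point, which both you and the paper gloss over, when $p^{\ast}$ is a corner of the partition). The second half, however, takes a genuinely different combinatorial route. The paper concatenates a path of length $\le N$ into $v_k$, an appropriate number of loops at $v_k$, and a path of length $\le N$ out of $v_k$ to show that \emph{every} entry of $A^{2N}$ is positive; then $\lambda^{2N}=\rho(A^{2N})$ is bounded below by the minimum row sum, which is at least $N$. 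You instead find a simple cycle of length $\ell\le N$ through $v_k$ and pass to the principal submatrix, which entrywise dominates the adjacency matrix of ``$\ell$-cycle plus loop,'' whose Perron root $\rho_0(\ell)$ satisfies $\rho_0(\ell)^{2\ell}\ge \ell$. Both approaches land on the same worst-case inequality $\lambda^{6|\chi(S)|}\ge 3|\chi(S)|$; yours is marginally sharper whenever the cycle is short ($\ell\ll N$), at the cost of the auxiliary estimate on $\rho_0$, while the paper's positivity-of-$A^{2N}$ argument is shorter and avoids the calculus. One small technical caveat: deriving $\rho_0(\ell)^{2\ell}\ge\ell$ ``via Bernoulli'' in the form $(1+x)^n\ge 1+nx$ applied to $\bigl(1+\tfrac{1}{\sqrt\ell-1}\bigr)^{2\ell}$ does not actually close for $\ell$ moderately large (around $\ell\ge 16$ the linear lower bound drops below $\ell$). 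The claim is nonetheless true and can be made rigorous by evaluating the increasing function $t\mapsto t^{\ell-1}(t-1)$ at $t=\ell^{1/(2\ell)}$: one needs $\sqrt\ell\,(1-\ell^{-1/(2\ell)})\le 1$, which follows from $1-e^{-x}\le x$ applied with $x=\tfrac{\log\ell}{2\ell}$ together with the elementary inequality $\log\ell\le 2\sqrt\ell$.
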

\begin{proof}
\textbf{First Step}: There exists a Markov partition and a rectangle $\mathcal{R}$ of the partition such that the interior of $\mathcal{R}$ and $f(\mathcal{R})$ intersect. \\

The map $f$ has a Markov partition with $k$ rectangles where $k \leq 3|\chi(S)|$ \cite{bestvina1995train}. The map $f$ has isolated singularities. Since the Lefschetz number of $f$ is negative, at least one of the local Lefschetz numbers of $f$, say at $p$, should be negative. We show that one of the rectangles that contain $p$ (in the interior or on the boundary) is the rectangle that we are looking for. If there exist a rectangle $\mathcal{R}$ that contains $p$ in the interior then we are done since $p \in \mathcal{R} \cap f(\mathcal{R})$. Otherwise, we claim that $p$ has to be of the following types:\\

i) $p$ is a non-singular fixed point and the transverse orientation of $\mathcal{F}^+$ at $p$ is preserved,\\

ii) $p$ is a singular fixed point and at least one of the separatrices emanating from $p$ is fixed by $f$,\\

Note that it is clear that if $p$ is of the above types then one of the rectangles of the Markov partition (constructed by Bestvina-Handel \cite{bestvina1995train}) around $p$ has the desired property. If $p$ is not of the above type then it would be one of the following:\\

iii) $p$ is a non-singular fixed point and the transverse orientation of $\mathcal{F}^+$ at $p$ is reversed, or\\

iv) $p$ is a singular fixed point and none of the separatrices emanating from $p$ are fixed by $f$.\\

However, direct calculation shows that in the third and fourth case, the local Lefschetz number of $p$ is equal to $+1$ which is inconsistent with our assumption about $p$ (see page 2262 of \cite{tsai2009asymptotic}). This completes the proof. 
\\

\textbf{Second Step}: As a corollary of the first step, the transition matrix associated to the Markov partition, $A=(a_{i,j})$, has a nonzero entry on the diagonal. Define an oriented graph $G$ with the vertex set $V$ such that there are $a_{i,j}$ oriented edges from $v_i$ to $v_j$. Since $A$ is Perron-Frobenius, $G$ is path connected by oriented paths. By the previous step, there is an $\ell$ such that $a_{\ell,\ell}>0$, therefore there is at least one edge from $v_\ell$ to itself. On the other hand any two vertices of $G$ are connected by an oriented path of length at most $k$. Hence, for any $i,j$ there are oriented paths of length at most $k$ from $v_i$ to $v_\ell$ and from $v_\ell$ to $v_j$. Putting these paths together and adding the loop at the vertex $v_\ell$ as much as necessary, we get an oriented path of length $2k$ from $v_i$ to $v_j$. We just showed that all entries of the matrix $A^{2k}$ are positive. This means that $\mu(A^{2k}) \geq k$ since the spectral radius of a non-negative matrix is bounded below by the minimum row (column) sum. Hence
\[ \log( \mu(A)) \geq \frac{\log(k)}{2k} \geq \frac{\log(3|\chi(S)|)}{6|\chi(S)|}.  \]
Note that by the Lefschetz formula, when $L(f) < 0$ the map $f$ has at least one fixed point and we can take the fixed point as a marked point. In other words, the condition of having at least one marked point is redundant here.
\end{proof}

\subsection{The order of $\Theta(g)$}\hfill\\

In this part, we briefly explain why $\Theta(g) > 3 ^{g^2}$. Let $\Gamma_S(3)$ be the kernel of the composition
\[ Mod(S_g) \longrightarrow Sp(2g,\mathbb{Z}) \longrightarrow Sp(2g,\mathbb{F}_3) \] 
where $\mathbb{F}_3$ is the field of three elements. Both of these maps are surjective. The first one is well known in the mapping class group theory (see for example \cite{farb2011primer}). The second one follows from strong approximation. (Morally speaking, it says that the mod $q$ solutions to a system of quadratic equations can be lifted to integral solutions under suitable conditions, where $q$ is a prime number \cite{kneser1966strong}.) Therefore, the index of the kernel is equal to the order of the image:
\[ \Theta(g) =  [Mod(S):\Gamma_S(3)] = |Sp(2g,\mathbb{F}_3)|. \]
But the order of $Sp(2m,\mathbb{F}_q)$ over a finite field $\mathbb{F}_q$ with $q$ elements is equal to \cite{grove2002classical}:
\[ q^{m^2} \prod_{i=1}^{m} \hspace{2mm} (q^{2i}-1) \]
which is obviously greater than $q^{m^2}$.

\subsection{On the modulus of algebraic integers}\hfill\\

In the proof of Proposition \ref{main}, we use some facts about modulus of algebraic integers. A complex number $\lambda$ is an \textbf{algebraic integer}, if it is a root of a monic polynomial with integer coefficients. The \textbf{degree of $\lambda$}, is the smallest possible degree of such polynomial. The smallest degree polynomial is called the $\textbf{minimal polynomial of $\lambda$}$. The \textbf{Galois conjugates of $\lambda$} are all the roots of the minimal polynomial, including $\lambda$ itself. Define $\overline{|\lambda|}$ to be the maximum modulus amongst all Galois conjugates of $\lambda$. Clearly, $\overline{|\lambda|} \geq 1$ and equality happens for roots of unity. \textbf{Kronecker's theorem} states that if $\overline{|\lambda|} = 1$ then $\lambda$ is a root of unity. Moreover, if $\lambda $ is not a root of unity and has degree $d$, then $\overline{|\lambda|} - 1$ is bounded below by a number that just depends on $d$. The conjectural best bound is of order $\frac{1}{d}$. In fact by looking at the number $\lambda = 2^{\frac{1}{d}}$, it is easy to see that this is the best one can hope for. This is called the Schinzel-Zassenhaus conjecture \cite{schinzel1965refinement}.

 \begin{conj}(Schinzel-Zassenhaus)
There exists a constant $c>0$ such that for any algebraic integer $\lambda \neq 0$ of degree $d$ which is not a root of unity we have
\[ \overline{|\lambda|} \geq 1+\frac{c}{d}. \]
\label{conj:Schinzel}
 \end{conj}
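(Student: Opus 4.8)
The final statement is the Schinzel--Zassenhaus conjecture, which is a well-known open problem, so what follows is a proposal for an attack together with an indication of where the essential difficulty lies. Throughout, write $P(x)=\prod_{i=1}^{d}(x-\lambda_i)\in\mathbb{Z}[x]$ for the minimal polynomial of $\lambda$, so the $\lambda_i$ are the Galois conjugates and $\overline{|\lambda|}=\max_i|\lambda_i|$. A first reduction: since $P(0)=\pm\prod_i\lambda_i$ is a nonzero integer, $\prod_i|\lambda_i|\ge 1$, so one cannot push all conjugates uniformly toward the unit circle; and by Smyth's theorem on non-reciprocal polynomials one may assume $P$ is reciprocal, $P(x)=x^{d}P(1/x)$, since otherwise even the Mahler measure is bounded below by an absolute constant. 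For reciprocal $P$, Dobrowolski's bound already gives $\overline{|\lambda|}\ge 1+c\,d^{-1}(\log\log d/\log d)^{3}$, so the whole problem is to remove the parasitic factor $(\log\log d/\log d)^{3}$ and recover the clean exponent $1/d$.

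The transcendence route would try to sharpen Dobrowolski's argument head-on: assuming $\overline{|\lambda|}$ extremely close to $1$, use Siegel's lemma to build an auxiliary polynomial $F\in\mathbb{Z}[x]$ (or, better, an interpolation determinant) of controlled degree and height vanishing to prescribed order at every $\lambda_i$; then the relevant symmetric quantity $\prod_i F(\lambda_i)$ is a rational integer of archimedean absolute value $<1$, and it is nonzero because $\lambda$ is not a root of unity, via Dobrowolski's lemma on the coprimality of $P(x)$ and $P(x^{p})$ for a well-chosen prime $p$. Optimizing over $p$ is exactly what produces Dobrowolski's bound, and I do not expect this route by itself to reach $1/d$, since the logarithmic loss is essentially built into the prime-density input.

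The route I would actually pursue exploits that taking a square root halves the relevant ``size''. Set $f(x)=x^{d}P(1/x)=\prod_i(1-\lambda_i x)\in 1+x\mathbb{Z}[x]$ and consider $g(x)=\sqrt{f(x)f(-x)}=\prod_i(1-\lambda_i^{2}x^{2})^{1/2}$; its coefficients are symmetric functions of the $\lambda_i^{2}$ up to powers of $2$, hence lie in $\mathbb{Z}[\tfrac12][[x]]$, while $g$ is an algebraic function, single-valued and meromorphic outside the compact set $K=\{x:\prod_i(1-\lambda_i^{2}x^{2})=0\}$. If $\overline{|\lambda|}^{4d}<2$ then, combining the archimedean capacity of $K$ with the $2$-adic control of the denominators, a rationality criterion of Bertrandias--P\'olya type forces $g$ to be rational; but then $g^{2}=\prod_i(1-\lambda_i^{2}x^{2})$ is a perfect square in $\mathbb{Q}(x)$, which contradicts squarefreeness of $P$ unless the conjugates come in $\pm$ pairs, in which case one replaces $\lambda$ by $\lambda^{2}$ (of half the degree) and iterates, the descent terminating because the degree strictly decreases. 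Hence $\overline{|\lambda|}\ge 2^{1/(4d)}\ge 1+\tfrac{\log 2}{4d}$, which is the conjecture with $c=\tfrac{\log 2}{4}$. The main obstacle, and the reason an elementary proof resisted for decades, is the simultaneous arithmetic book-keeping at the two places: arranging the branch of the square root so the denominators are controlled exactly $2$-adically — this is the source of the $2$ and of the factor $4$ — while estimating the transfinite diameter of $K$ in terms of $\overline{|\lambda|}$ sharply enough to cross the threshold $1$ with the optimal exponent.
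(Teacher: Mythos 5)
The paper does not prove this statement; it is explicitly recorded as an open conjecture, and for the estimates in Section~3 the paper instead combines the unconditional but weaker bounds of Dobrowolsky and of Schinzel--Zassenhaus (Theorems~\ref{thm:Dobrowolsky} and \ref{thm:Schinzel}) into the inequality $(*)$ in the proof of Proposition~\ref{case1}, remarking only that the exponent $1+\epsilon$ there would improve to $1$ if the conjecture were known. There is therefore no paper proof against which to measure your proposal, and you were right to frame it as a proposal rather than a proof.

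It is worth noting, though, that your second route is in outline the argument by which Dimitrov resolved the conjecture (in 2019, after this paper was written), including the final constant $2^{1/(4d)}$. The ingredients you name --- a square root of a product built from the reciprocal polynomial, a P\'olya/Bertrandias-type rationality criterion balanced against a transfinite-diameter (capacity) estimate for the branch locus, and a descent when the conjugates come in $\pm$-pairs --- are exactly the ones that worked. The step your sketch glosses over is the arithmetic of the square root: $\sqrt{f(x)f(-x)}$ as you define it has coefficients only in $\mathbb{Z}[\tfrac12]$, so the rationality criterion would have to be run at the archimedean place and at $2$ simultaneously, which requires a $2$-adic capacity input in addition to the archimedean one; Dimitrov's actual auxiliary power series is engineered so that the $2$-adic denominators cancel and one works honestly in $\mathbb{Z}[[x]]$, allowing the classical archimedean criterion. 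The other hard input your sketch elides is the sharp capacity bound for the hedgehog-shaped compact $K$, which is what lets the numerics cross the threshold at the optimal exponent.
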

\noindent Although Schinzel-Zassenhaus conjecture is still open, a slightly weaker form of it has been proved by Dobrowolsky \cite{dobrowolski1979question}.

\begin{thm}(Dobrowolsky)
Let $\lambda$ be an algebraic integer of degree $d$. For large enough $d$ if $\lambda$ is not a root of unity then
\[ \overline{|\lambda|} \geq 1+\frac{1}{d} \bigg( \frac{\log\log(d)}{\log(d)} \bigg)^3. \]
\label{thm:Dobrowolsky}
\end{thm}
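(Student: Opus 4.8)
The plan is to prove the (formally stronger, but in this regime equivalent) lower bound for the Mahler measure $M(\lambda) = \prod_{i=1}^{d}\max(1,|\lambda_i|)$, where $\lambda = \lambda_1,\dots,\lambda_d$ run over the Galois conjugates of $\lambda$, and then to pass to the house $\overline{|\lambda|}$ via the elementary inequality $M(\lambda)\le\overline{|\lambda|}^{\,d}$, i.e.\ $\overline{|\lambda|}\ge M(\lambda)^{1/d}$. Concretely I would fix the monic integral minimal polynomial $P$ of $\lambda$, assume $\lambda$ is not a root of unity (so $M(\lambda)>1$ by Kronecker's theorem), and argue by contradiction: fix $\lambda$ of sufficiently large degree $d$ and suppose $\log M(\lambda)<(1-\varepsilon)\big(\tfrac{\log\log d}{\log d}\big)^{3}$ for a fixed small $\varepsilon>0$.

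The arithmetic input is the Frobenius congruence $F(x^{p})\equiv F(x)^{p}\pmod p$, valid for any $F\in\mathbb{Z}[x]$ and prime $p$. Taken with $F=P$ and evaluated at a conjugate it gives $P(\lambda_i^{\,p})\in p\mathbb{Z}[\lambda_i]$ (since $P(\lambda_i)=0$), whence the symmetric integer $\prod_{i=1}^{d}P(\lambda_i^{\,p})$ is divisible by $p^{\,d}$; a single prime is far too weak, so the real construction uses an auxiliary polynomial. By a Thue--Siegel pigeonhole (Siegel's lemma) I would produce a nonzero $F\in\mathbb{Z}[x]$ of degree below a bound $L$, with controlled coefficient size, vanishing to high order $T$ at $\lambda$ (hence at every conjugate); the tradeoff between $L$, $T$ and the resulting height $\mathrm{H}(F)$ must be tuned, which is the first delicate point. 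For such an $F$ one shows that the rational integer $R_p:=\prod_{i=1}^{d}F(\lambda_i^{\,p})$ is divisible by a power of $p$ growing with the vanishing order $T$, by expanding $F(\lambda_i^{\,p})$ about $\lambda_i$ and combining the vanishing conditions with the Frobenius congruence. Equally important, $R_p\ne 0$: this is where Kronecker's theorem re-enters, since $R_p=0$ would force $x\mapsto x^{p}$ to send some conjugate of $\lambda$ to another, and chaining this over several primes via pigeonhole on the finite Galois orbit yields $\lambda^{p^{a}}=\lambda^{p^{b}}$ with $a\ne b$, i.e.\ $\lambda$ a root of unity.

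With this in hand I would run the construction over all primes $p\le N$ at once. By coprimality the nonzero integer $\prod_{p\le N}R_p$ is divisible by $\big(\prod_{p\le N}p\big)$ raised to a power proportional to $T$, which by the prime number theorem has absolute value at least $e^{(1+o(1))cNT}$. On the other side the triangle inequality bounds $|F(\lambda_i^{\,p})|$ by $\mathrm{H}(F)$ times $\max(1,|\lambda_i|)^{\,pL}$, and multiplying over the conjugates converts the second factor into a power of $M(\lambda)$, giving $\big|\prod_{p\le N}R_p\big|\le\mathrm{H}(F)^{\,\pi(N)d}\,M(\lambda)^{\,c'N^{2}L}$. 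Taking logarithms and feeding in the standing hypothesis on $\log M(\lambda)$ reduces everything to a single numerical inequality among the free parameters $T$, $L$, $N$ and the degree $d$.

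The crux — and the reason the exponent in the theorem comes out exactly $3$ — is the optimization of these parameters: one wants $N$ as large as the prime-power gain affords, $L$ only slightly above the value forced by the vanishing order $T$ (so that $\mathrm{H}(F)$ stays small), and $T$ balanced against $d$; tracking the lower-order terms, the optimal choices all come out as suitable powers of $\log d/\log\log d$, whose product is $\big(\tfrac{\log\log d}{\log d}\big)^{3}$. I expect this optimization, together with the precise prime-power divisibility of $R_p$ and the Siegel's-lemma height estimate, to be the main obstacle; the rest is routine. Once $\log M(\lambda)\ge(1-\varepsilon)\big(\tfrac{\log\log d}{\log d}\big)^{3}$ is established for every $\varepsilon>0$ and all sufficiently large $d$, extracting $d$-th roots gives $\overline{|\lambda|}\ge M(\lambda)^{1/d}\ge 1+\tfrac{1-\varepsilon}{d}\big(\tfrac{\log\log d}{\log d}\big)^{3}$ for $d$ large, which yields the stated bound.
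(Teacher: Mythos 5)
The paper does not prove this statement: Theorem~\ref{thm:Dobrowolsky} is quoted verbatim from Dobrowolski's paper \cite{dobrowolski1979question} and used as a black box, so there is no internal proof to compare your attempt against. What you have written is a reasonable high-level sketch of Dobrowolski's original argument (and of the Cantor--Straus style simplifications): construct an auxiliary integral polynomial $F$ vanishing to high order $T$ at the conjugates of $\lambda$, exploit the Frobenius congruence $F(x^p)\equiv F(x)^p\pmod p$ to show the symmetric integers $R_p=\prod_i F(\lambda_i^p)$ are divisible by a large power of $p$, bound $|R_p|$ from above via the height of $F$ and the Mahler measure, and average over primes $p\le N$ with parameters $T,L,N$ tuned to powers of $\log d/\log\log d$, finally passing from $M(\lambda)$ to $\overline{|\lambda|}$ via $M(\lambda)\le\overline{|\lambda|}^{\,d}$. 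That is the right shape of the argument and the right reason for the exponent $3$.

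Two places where your sketch elides real content. First, the nonvanishing of $R_p$: if $F$ vanishes to order $T$ at the conjugates of $\lambda$, then $P^T\mid F$, so $F$ may well have additional roots; $F(\lambda_i^p)=0$ therefore does not immediately mean that $x\mapsto x^p$ maps a conjugate of $\lambda$ to a conjugate of $\lambda$, which is the only case your Kronecker argument rules out. In Dobrowolski's proof this is handled by a more careful choice of auxiliary function (in effect $F$ is taken closely tied to $P$ itself, not an arbitrary Siegel-lemma solution), and this is exactly where the argument refuses to improve past exponent $3$. Second, the quantitative Siegel-lemma step (height of $F$ versus $T$ and $L$) and the optimization of $T,L,N$ against $d$ are where all the work lives; ``the rest is routine'' understates this. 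As a proof of the cited theorem from scratch the write-up would need those two points filled in, but as an account of the method it is accurate, and in any case the paper itself relies on the reference rather than reproving the result.
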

\noindent Note that Dobrowolsky theorem does not take care of small values of $d$. Therefore we use the following theorem of Schinzel-Zassenhaus for small values of $d$ \cite{schinzel1965refinement}. 

\begin{thm}(Schinzel-Zassenhaus)
If an algebraic integer $\lambda \neq 0$ is not a root of unity, and if $2s$ among its conjugates have nonzero imaginary part, then
\[ \overline{|\lambda|} > 1+4^{-s-2}. \]
\label{thm:Schinzel}
\end{thm}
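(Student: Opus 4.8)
\section*{Proof proposal for Theorem~\ref{thm:Schinzel}}

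The plan is to argue by contradiction: suppose the algebraic integer $\lambda\ne 0$ is not a root of unity yet $M:=\overline{|\lambda|}\le 1+4^{-s-2}$, and deduce $\lambda=0$. The only arithmetic input will be the elementary fact that a nonzero rational integer has absolute value at least $1$; the whole argument amounts to building such an integer out of $\lambda$ and then bounding it from above. Write the Galois conjugates as $\lambda_1,\dots,\lambda_d$, with the $2s$ non-real ones grouped into complex-conjugate pairs $r_j e^{\pm i\theta_j}$ $(j=1,\dots,s,\ \theta_j\in(0,\pi))$ and the remaining $d-2s$ real. Since the minimal polynomial $P$ of $\lambda$ is irreducible and is not cyclotomic, no $\lambda_i$ is a root of unity, so $\lambda_i^{\,n}\ne 1$ for every $n\ge 1$; hence for each $n$
\[ R_n \;:=\; \prod_{i=1}^{d}\bigl(\lambda_i^{\,n}-1\bigr)\;=\;\pm\,\mathrm{Res}\!\bigl(P(x),\,x^{n}-1\bigr) \]
is a \emph{nonzero} rational integer, so $|R_n|\ge 1$.

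The role of the hypothesis on $s$ is that the non-real conjugates are the only ones that can sit near the unit circle without being roots of unity, and one tames them by passing to a power of $\lambda$. Apply Dirichlet's theorem on simultaneous Diophantine approximation to the $s$ numbers $\theta_1/\pi,\dots,\theta_s/\pi$: for a parameter $Q$ to be chosen there is an \emph{even} integer $n$ with $n\le 2Q^{s}$ such that each $n\theta_j$ lies within $\delta:=2\pi/Q$ of a multiple of $2\pi$. For this $n$, every conjugate $\lambda_i^{\,n}$ of $\nu:=\lambda^{n}$ lies in the thin sector $\{\,|\arg z|\le\delta\,\}$ — the real conjugates landing on the nonnegative real axis because $n$ is even — and has modulus at most $M^{n}$.

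Now comes the elementary sector estimate: if $z=\rho e^{i\varphi}$ with $|\varphi|\le\delta<\pi/2$ and $0\le\rho\le M^{n}$, then $|z-1|^{2}=\rho^{2}-2\rho\cos\varphi+1\le\rho^{2}-2\rho\cos\delta+1$, which, provided $M^{n}\le 2\cos\delta$, is at most $1$, with equality only when $\rho=0$. Applying this to each factor of $R_n$ gives $|R_n|\le 1$; combined with $|R_n|\ge 1$ this forces every factor $|\lambda_i^{\,n}-1|$ to equal $1$, hence every $\lambda_i^{\,n}=0$, i.e. $\lambda=0$, the desired contradiction. Thus $M^{n}>2\cos(2\pi/Q)$ for the $n$ produced above; feeding in $M\le 1+4^{-s-2}$ (so $M^{n}\le\exp\!\bigl(\tfrac18(Q/4)^{s}\bigr)$) and choosing $Q$ is meant to be impossible, which gives the claimed inequality. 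As a sanity check, for $s=0$ one may take $n=2$, $\delta=0$, and the argument already yields $M>\sqrt2$; for $s=1$ a choice such as $Q=12$ makes the two sides compatible.

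The main obstacle is precisely this last calibration. The pigeonhole step wants $n$ as large as $\sim Q^{s}$ to make the sector thin enough that $2\cos(2\pi/Q)$ beats the near-$1$ quantity $M^{n}$, while $M^{n}\le(1+4^{-s-2})^{n}$ permits $n$ only of order $4^{s}$; since one cannot take $Q<5$ without $\cos(2\pi/Q)$ becoming nonpositive, the two constraints clash for large $s$ if one uses plain Dirichlet approximation together with the auxiliary polynomial $x^{n}-1$. Landing on the exact constant $4^{-s-2}$ (rather than a softer $c^{-s}$) therefore requires the refinements present in Schinzel and Zassenhaus's original argument: a sharper replacement for simultaneous Dirichlet approximation (extracting an alignment that holds for \emph{many} exponents, not just one), and/or a more efficient auxiliary integer — for instance replacing $x^{n}-1$ by a product of a few cyclotomic factors, or iterating the power trick — together with a tighter sector estimate. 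One should also verify the small values of $s$ by hand. The structurally pleasant feature, which is exactly why only $s$ and not $d$ enters the final bound, is that the $d-2s$ real conjugates contribute factors $|z-1|\le 1$ automatically once $n$ is even and $M^{n}<2$, so they leave no trace in the constant.
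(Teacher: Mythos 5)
The paper does not prove Theorem~\ref{thm:Schinzel}; it is a background result quoted from Schinzel and Zassenhaus \cite{schinzel1965refinement}, so there is no internal proof to compare your attempt against.

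Your sketch identifies a reasonable arithmetic device: the nonzero rational integer $R_n=\prod_i(\lambda_i^{\,n}-1)=\mathrm{Res}(P,x^n-1)$, squeezed between the integrality lower bound $|R_n|\ge 1$ and an upper bound $|R_n|<1$ obtained by aligning the arguments of the $\lambda_i^{\,n}$ in a thin sector about the positive real axis. But, as you yourself diagnose, the calibration fails. Dirichlet simultaneous approximation with mesh $\delta=2\pi/Q$ only guarantees an aligning exponent $n$ somewhere in $[1,Q^{s}]$, with no control over its size, while the hypothesis $\overline{|\lambda|}\le 1+4^{-s-2}$ keeps $M^{n}$ below $2\cos\delta$ only for $n$ up to roughly $4^{s+2}\log(2\cos\delta)$. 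One needs $Q\ge 7$ for $2\cos(2\pi/Q)>1$, and then $Q^{s}$ outpaces $4^{s}$ for moderate $s$; at best the approach delivers a bound with a worse base, of the form $1+c\cdot 7^{-s}$, rather than the claimed $1+4^{-s-2}$, and for $s\ge 3$ or so this is strictly weaker. Your list of what would be needed to fix this --- a sharper alignment lemma producing a small good exponent, a more economical auxiliary integer than $\mathrm{Res}(P,x^{n}-1)$, a tighter sector estimate --- is a fair description of the missing work, but none of it is supplied, so this is a roadmap rather than a proof.

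One smaller logical slip: from $|R_n|\ge 1$ and every factor $|\lambda_i^{\,n}-1|\le 1$, you conclude all factors equal $1$ and then write ``hence $\lambda_i^{\,n}=0$.'' That inference requires the \emph{strict} inequality $M^{n}<2\cos\delta$: in the sector, $|z-1|=1$ has the two solutions $z=0$ and $z=2\cos\varphi\,e^{i\varphi}$, and only the strict modulus bound excludes the second. Easy to repair, but as written the step does not follow.
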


\section{Proof of Theorem \ref{maintheorem}}

\begin{proof}
Let $f \in Mod(S_{g,n})$ be a pseudo-Anosov map. Denote by $\lambda(f)$ the dilatation of $f$. The idea is to look at the Lefschetz number of $f$, which we denote by $L(f)$. Define $\hat{f} \in Mod(S_{g,0})$ to be the map obtained by forgetting the punctures. The following two observations have been made by Tsai \cite{tsai2009asymptotic}.

1) $L(f) = L(\hat{f})$. 

2) If $L(f)<0$ and $f$ is pseudo-Anosov, then $\log(\lambda(f)) \geq \frac{\log(3|\chi(S)|)}{6|\chi(S)|}$ (see Lemma \ref{negative-lefschetz}).\\
The aim is to find a suitable power $\nu$ of $f$ such that $L(f^{\nu})<0$ and then use the above bound. For any map $\phi \in Mod(S_{g,0})$, we have $L(\phi)=2-Tr(\phi _*)$ where $\phi_* : H_1(S) \longrightarrow H_1(S)$ is the induced map on homology. Proposition \ref{main} shows that one can find such a power that is at most polynomially large in terms of the genus.\\
In Proposition \ref{main}, take $B=2$, $\epsilon = \alpha$, $\phi = \hat{f}$ and $A=\phi_*$. Therefore $m=2g$. Hence, if $g \gg 0$ there is some $\nu \leq (2g)^{2+\alpha}$ such that 
\[ L(f^{\nu})=L\left(\widehat{f^{\nu}}\right)=L\left((\hat{f})^{\nu}\right)=2-Tr(A^\nu)<0. \]
Since $f^{\nu}$ is pseudo-Anosov we have the following
\[\nu \log(\lambda(f)) = \log(\lambda(f^{\nu})) \geq \frac{\log(3|\chi(S)|)}{6|\chi(S)|}. \]
\[ \Rightarrow  \log(\lambda(f)) \geq \frac{1}{\nu} \frac{\log(3|\chi(S)|)}{6|\chi(S)|} \geq \frac{1}{(2g)^{2+\alpha}}  \frac{\log(3|\chi(S)|)}{6|\chi(S)|}.  \]
This finishes the proof when $g \gg 0$ let say for $g \geq N$. For the finitely many remaining values of $2 \leq g<N$, we use Lemma \ref{Dirichlet} \cite{montgomery1994ten}. Since $det(A)=1$, by Lemma \ref{Dirichlet} there exist a $1 \leq \nu \leq 8^{2g}$ such that
\[ Tr(A^{\nu}) \geq \frac{2g}{\sqrt{2}} > 2. \]
Therefore
\[ \log(\lambda(f)) \geq \frac{1}{\nu} \frac{\log(3|\chi(S)|)}{6|\chi(S)|} \geq \frac{1}{8^{2g}} \frac{\log(3|\chi(S)|)}{6|\chi(S)|}. \]
So if we define $c_j = \dfrac{\hspace{1mm}j^{2+ \alpha}\hspace{1mm}}{8^{2j}}$ and set
\[  C' = \min \left\{ c_1 , ... , c_{N-1}, \frac{1}{2^{2+\alpha}}\right\}. \]
Then, we have the following for each $g \geq 2$ and $n \geq 0$
\[ \log(\lambda(f)) \geq \frac{C'}{g^{2+\alpha}} \frac{\log(3|\chi(S)|)}{6|\chi(S)|} \geq \frac{C}{g^{2+\alpha}} \frac{\log(|\chi(S)|)}{|\chi(S)|} \]
for $C = \dfrac{C'}{6}$. \\
\end{proof}

\begin{remark}
One can use Theorem \ref{tsai} instead of Lemma \ref{Dirichlet} to take care of the finitely many remaining values of $g<N$. However, we preferred to use a more elementary approach.
\end{remark}

\noindent The next Lemma has been used in the proof of Theorem \ref{maintheorem}.  

\begin{lem}
Let $z_1 , ... , z_m$ be complex numbers. Define 
\[ S_{\nu} = z_1^{\nu}+... + z_m^{\nu}. \]
There is a $\nu$, $1 \leq \nu \leq 8^m$ such that 
\[ Re(S_{\nu}) \geq \frac{1}{\sqrt{2}} \sum_{j=1}^{m}|z_j|^{\nu}. \]
In particular if $|z_1 ... z_m| =1$ then there is a $\nu$, $1 \leq \nu \leq 8^m$ such that $Re(S_{\nu}) \geq \frac{m}{\sqrt{2}}$.
\label{Dirichlet}
\end{lem}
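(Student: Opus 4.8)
The plan is to use the pigeonhole principle (Dirichlet's theorem) on the arguments of the $z_j$, following the idea that if the arguments $\nu \theta_j$ can all be forced close to $0 \pmod{2\pi}$ simultaneously for some bounded $\nu$, then each summand $z_j^\nu$ points nearly in the positive real direction, so its real part is a definite fraction of $|z_j|^\nu$. First I would reduce to the case where all $|z_j| \le 1$ after discarding zero terms and normalizing: write $z_j = |z_j| e^{2\pi i \theta_j}$ with $\theta_j \in [0,1)$ (the modulus bound is irrelevant to the argument, but it does no harm to keep it). Then I would consider the $8^m$ points in the $m$-torus $(\mathbb{R}/\mathbb{Z})^m$ given by $(\{\nu \theta_1\}, \dots, \{\nu \theta_m\})$ for $\nu = 0, 1, \dots, 8^m$. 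Actually the cleaner route is the classical one variable argument applied coordinatewise: partition $[0,1)^m$ into $N^m$ subcubes of side $1/N$; since we have $N^m + 1$ values of $\nu$ in $\{0, 1, \dots, N^m\}$, two of them, say $\nu_1 < \nu_2$, land in the same subcube, and then $\nu = \nu_2 - \nu_1$ satisfies $\|\nu \theta_j\| \le 1/N$ for every $j$, where $\|\cdot\|$ denotes distance to the nearest integer.

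Next I would choose $N$ so that $1/N$ forces $\cos(2\pi \|\nu\theta_j\|) \ge 1/\sqrt{2}$, i.e. $2\pi/N \le \pi/4$, so $N \ge 8$ suffices; taking $N = 8$ gives $1 \le \nu \le 8^m$. For this $\nu$ each term satisfies
\[
\mathrm{Re}(z_j^\nu) = |z_j|^\nu \cos(2\pi \nu \theta_j) \ge |z_j|^\nu \cos(\pi/4) = \frac{1}{\sqrt{2}} |z_j|^\nu,
\]
and summing over $j$ gives $\mathrm{Re}(S_\nu) \ge \frac{1}{\sqrt{2}} \sum_j |z_j|^\nu$, as claimed. (One must handle the degenerate case where some $z_j = 0$ by simply omitting it from the torus construction; it contributes $0$ to both sides, so the inequality is unaffected.)

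For the final sentence, if $|z_1 \cdots z_m| = 1$ then by the AM–GM inequality $\frac{1}{m}\sum_j |z_j|^\nu \ge \left(\prod_j |z_j|^\nu\right)^{1/m} = |z_1 \cdots z_m|^{\nu/m} = 1$, so $\sum_j |z_j|^\nu \ge m$ and hence $\mathrm{Re}(S_\nu) \ge m/\sqrt{2}$. The main point requiring care is bookkeeping on the pigeonhole range: one wants the bound on $\nu$ to come out as exactly $8^m$, which pins down the subcube side length to $1/8$ and forces the constant $\cos(\pi/4) = 1/\sqrt{2}$; any attempt to get a constant closer to $1$ would blow up the range to $N^m$ with $N$ large, so the statement is essentially the optimal trade-off this elementary method provides, and there is no real obstacle beyond making these constants consistent.
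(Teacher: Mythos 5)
Your proof is correct and is essentially the same argument as the paper's: the paper's partitioning of the plane into $8$ angular sectors and coding $(z_1^k,\dots,z_m^k)$ by a vector in $\{1,\dots,8\}^m$ is exactly your partitioning of the torus $[0,1)^m$ into $8^m$ subcubes of side $1/8$ for the fractional parts $(\{\nu\theta_1\},\dots,\{\nu\theta_m\})$, and both then apply pigeonhole to $8^m+1$ exponents and subtract the colliding pair. The AM--GM step for the second assertion is also the same.
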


\begin{proof}
Decompose the plane into $8$ equal sections according to the angle. For $1 \leq i \leq 8$:
\[ V_i = \{ (r,\theta) \in \mathbb{R}^2 | (i-1)\frac{2\pi}{8} \leq \theta < i \frac{2\pi}{8} \}. \]
For each $1 \leq k \leq 8^m+1$ we code the regions in which the points $z_1^k, ... , z_m^k$ lie with a vector 
\[A_k = (a_1 , ... , a_m) \]
where $1 \leq a_i \leq 8$. By the pigeonhole principle there are distinct indices $1 \leq i,j \leq 8^m+1$ such that $A_{i} = A_j$. Therefore
\[ A_{|j-i|} = (b_1 , ... , b_m)\]
where $b_{\ell} \in \{ 1,8\} $ for each $1 \leq \ell \leq m$. This implies that for $\nu = |j-i|$
\[ Re(S_{\nu}) \geq \frac{1}{\sqrt{2}} \sum_{j=1}^{m}|z_j|^{\nu}. \]
The conclusion of the second part of the lemma is obtained by using the AM-GM inequality:
\[\frac{ |z_1|^{\nu}+...+|z_m|^{\nu}}{m} \geq \sqrt[m]{|z_1 ... z_m|^{\nu}}=1. \] 
\end{proof}

\noindent For any real matrix $A$, we use the notation $\rho(A)$ for the spectral radius of $A$, i.e. the largest absolute value of its eigenvalues. The next Proposition is the main technical result that has been used in this paper.
\begin{prop}
Fix $B>0$ and $\epsilon >0$. There exist $n=n(B,\epsilon)$ such that for any $m \geq n$ and any $A \in SL(m,\mathbb{Z})$ we have the following:\\
There is some $\nu$, $1 \leq \nu \leq m^{2+\epsilon}$ such that 
\[Tr(A^\nu)>B. \]
\label{main}
\end{prop}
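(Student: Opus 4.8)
The plan is to translate the statement into a question about the power sums $S_\nu := Tr(A^\nu) = \sum_{i=1}^{m}\lambda_i^\nu$, where $\lambda_1,\dots,\lambda_m$ are the eigenvalues of $A$ with multiplicity, and then to combine a gap principle for algebraic integers (Dobrowolsky, with Schinzel--Zassenhaus for small degrees) with elementary Fourier/power-sum estimates. First I would record the structural facts: each $S_\nu$ is a rational integer, $\prod_i\lambda_i = \det A = 1$, and --- factoring the characteristic polynomial into monic integer irreducibles, each of whose constant terms divides $\det A = \pm 1$ --- every $\lambda_i$ is an algebraic unit; in particular $\rho(A) = \max_i|\lambda_i|\ge 1$. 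The argument then splits according to whether $\rho(A)>1$ or $\rho(A)=1$.

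If $\rho(A)>1$, pick an eigenvalue $\alpha$ with $|\alpha|=\rho(A)$: it is an algebraic integer of degree $d\le m$ which is not a root of unity and has $\overline{|\alpha|}=\rho(A)$. Theorem~\ref{thm:Dobrowolsky} for $d$ above an absolute constant, and Theorem~\ref{thm:Schinzel} below it, together with $d\le m$, give $\rho(A)\ge 1+\delta_m$ with $\delta_m\gg m^{-1}(\log\log m/\log m)^3$ once $m$ is large. The plan here is to search for $\nu$ in a window of length $m$ placed far enough out that $\rho(A)^\nu$ already exceeds every fixed power of $m$ (this needs only $\nu$ of size $m\,(\log m)^{O(1)}\le m^{2+\epsilon}$): there a Tur\'an-type power-sum inequality --- lower-bounding $\max_\nu|S_\nu|$ over the window in terms of $\rho(A)^\nu$ up to a loss only polynomial in $m$, applied to the subfamily of eigenvalues of nearly maximal modulus --- forces some $|S_\nu|$ to exceed $B$ comfortably. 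The remaining and, I think, genuinely delicate point is to promote this to $S_\nu>B$, i.e.\ to control cancellation among the dominant eigenvalues; I would do this with a one-sided version of the Tur\'an estimate (bounding $\max_\nu \Text{Re}(S_\nu)$) or with a Dirichlet/pigeonhole argument as in Lemma~\ref{Dirichlet} applied to the arguments of the dominant eigenvalues --- noting that arguments extremely close to $0$ cannot flip the sign of their contribution over a window of length $m^{2+\epsilon}$, so only boundedly many ``medium'' arguments need to be steered into a half-plane --- together with the integrality of $S_\nu$.

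If $\rho(A)=1$, then $|\lambda_i|=1$ for all $i$ (the product is $1$), so Kronecker's theorem makes every $\lambda_i$ a root of unity; write the characteristic polynomial as $\prod_k \Phi_k^{e_k}$ with $\sum_k e_k\varphi(k)=m$, where $\Phi_k$ is the $k$-th cyclotomic polynomial and $\varphi$ is Euler's totient. Since $\varphi(k)\le m$, every order $k$ occurring is $O(m\log\log m)$, and the distribution-of-totients bound $\#\{k:\varphi(k)\le x\}=O(x)$ shows only $O(\sqrt m)$ distinct orders occur. Here $S_\nu=\sum_k e_k c_k(\nu)$ is a sum of Ramanujan sums, with $|c_k(\nu)|\le \gcd(k,\nu)$ and $c_k(\nu)=\varphi(k)$ whenever $k\mid\nu$. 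The strategy is to take $\nu$ to be the $\mathrm{lcm}$ of a divisor-closed sub-collection of the occurring orders whose total mass $\sum_{k\mid\nu}e_k\varphi(k)$ exceeds $m/2$ and whose $\mathrm{lcm}$ is at most $m^{2+\epsilon}$, so that the captured orders dominate; when the orders are too spread out for such an $\mathrm{lcm}$ to exist (for instance many distinct large primes, a configuration that $\det A=1$ does not preclude), I would instead take $\nu$ a single well-chosen prime, where a bound on the Mertens function again makes $S_\nu$ large. One must then check that the left-over Ramanujan sums cannot overwhelm the captured mass, using the totient count to limit how many orders can share a large common factor with $\nu$.

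The main obstacle, in both cases, is precisely this conversion of ``$|S_\nu|$ is large for some $\nu$ in a short window'' into ``$S_\nu>B$ for some $\nu\le m^{2+\epsilon}$'': controlling cancellation among eigenvalues of nearly maximal modulus when $\rho(A)>1$, and among Ramanujan sums when $\rho(A)=1$, while keeping $\nu$ polynomially bounded. Making the Tur\'an/Fourier input quantitative and uniform in $m$, so that all the polylogarithmic losses are absorbed into the exponent $2+\epsilon$ (which is presumably why an arbitrary $\epsilon>0$, rather than the bare exponent $2$, appears), is the technical heart of the argument; the reductions, Kronecker's theorem, the totient estimate, and the invocations of Dobrowolsky's and Schinzel--Zassenhaus's theorems are comparatively routine.
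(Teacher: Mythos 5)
Your split by $\rho(A)>1$ versus $\rho(A)=1$, and your treatment of the first case---Dobrowolsky/Schinzel--Zassenhaus gap $\rho(A)\ge 1+c/(m\log^3 m)$ together with a one-sided Tur\'an (Fej\'er-kernel, Cassels-type) estimate on the normalized power sums---is essentially the route of Proposition~\ref{case1}. There the paper sets $K_0\asymp (B/c)\,m\log^3 m$ and $K=m^{1+\epsilon}$, shows that if $S_\nu\le B$ throughout $[1,K_0]$ then the Fej\'er kernel forces $S_\nu\ge \tfrac{1}{20}$ somewhere in $[K_0,K]$, and then $|\lambda_1|^{K_0}$ multiplies that $\tfrac{1}{20}$ past $B$; the loss is a fixed constant, not a power of $m$. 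One caveat in your outline: the Dirichlet/pigeonhole device of Lemma~\ref{Dirichlet} cannot substitute here, since it costs $8^m$ in the range of $\nu$, far beyond the $m^{2+\epsilon}$ budget---the paper only uses it for the finitely many small $g$ in Theorem~\ref{maintheorem} precisely for this reason.

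Your treatment of $\rho(A)=1$ is genuinely different from the paper's and has a gap. You propose taking $\nu$ to be the lcm of a divisor-closed subfamily of cyclotomic orders whose totient-mass exceeds $m/2$ and whose lcm is $\le m^{2+\epsilon}$, falling back to a single prime $\nu$ when the orders are spread among pairwise coprime values. Neither branch is guaranteed: with, say, many pairwise coprime orders each of moderate totient, no subfamily has both large mass and small lcm, while a prime $\nu$ coprime to most orders turns most Ramanujan sums $c_k(\nu)$ into $\mu$-type contributions of the wrong sign, and the bookkeeping (including the ``$O(\sqrt m)$ distinct orders'' count, which does not follow directly from $\#\{k:\varphi(k)\le x\}=O(x)$ without an additional greedy argument) does not visibly close. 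The paper's Proposition~\ref{case2} sidesteps all of this by capturing only \emph{one} order: the largest $k_l$, which satisfies $\varphi(k_l)>B$ once $m$ is large (the small-$k_l$ case is dispatched by $\nu=(B')!$). It then multiplies, taking $\nu=k_l\nu'$ with $1\le\nu'\le m$ chosen by Lemma~\ref{lastlemma} so that the power sum $S$ of the $k_l$-th powers of the remaining roots is non-negative; then $Tr(A^\nu)=\varphi(k_l)+S>B$, and $\nu\le k_l\cdot m< m^{2+\epsilon}$ because $\varphi(k_l)\le m$ forces $k_l<m^{1+\epsilon}$ for large $m$. Lemma~\ref{lastlemma}---a short Newton--Girard argument producing a non-negative power sum at some index $\le \deg+1$---is the ingredient your outline is missing; with it, one needs neither a mass-$m/2$ lcm nor a Mertens-function bound.
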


\noindent Proposition \ref{main} obviously follows from the combination of Propositions \ref{case1} and \ref{case2}.

\begin{prop}
Fix $B>0$ and $\epsilon >0$. There exist $n=n(B,\epsilon)$ such that for any $m \geq n$ and any $A \in SL(m,\mathbb{Z})$ with $\rho(A)>1$ we have the following:\\
There is some $\nu$, $1 \leq \nu \leq m^{1+\epsilon}$ such that 
\[Tr(A^{\nu})>B. \]
\label{case1}
\end{prop}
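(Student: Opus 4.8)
The plan is to work with the integer sequence $a_\nu:=\mathrm{Tr}(A^\nu)$ and to play the Dobrowolsky/Schinzel--Zassenhaus lower bounds for the house of an algebraic integer off against a Fourier-analytic study of the $a_\nu$. First I would extract the algebraic input. Since $\rho(A)>1$, pick an eigenvalue $\lambda$ with $|\lambda|=\rho(A)$. Because $A\in SL(m,\mathbb{Z})$ the characteristic polynomial $\chi_A$ is monic with constant term $\pm1$, so each of its monic $\mathbb{Z}$-irreducible factors has constant term $\pm1$; hence $\lambda$ is an algebraic unit, its degree $d$ satisfies $d\le m$, and every Galois conjugate of $\lambda$ is again an eigenvalue of $A$. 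In particular $\rho(A)=\overline{|\lambda|}$ and $\lambda$ is not a root of unity, so, applying Theorem~\ref{thm:Schinzel} when $d$ is below a fixed absolute threshold and Theorem~\ref{thm:Dobrowolsky} (whose bound is decreasing in $d$, so the range $d\le m$ costs nothing) when $d$ is above it, we obtain $\rho(A)\ge 1+\delta$ with $1/\delta$ bounded by a fixed polynomial in $m$ — in fact $1/\delta=O\!\big(m(\log m/\log\log m)^{3}\big)$. The consequence I will use repeatedly is that $\log(2m)/\log\rho(A)\le 2\log(2m)/\delta\le m^{1+\epsilon}$ once $m$ is large relative to $\epsilon$.

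Next I would dispose of the gapped case, in which the set of eigenvalues of $A$ of modulus $>1$ is exactly $\{\lambda,\bar\lambda\}$. Here $a_\nu=2m_\lambda\,\rho^\nu\cos(\nu\arg\lambda)+E_\nu$ with $|E_\nu|\le m$ (where $m_\lambda\ge 1$ is the multiplicity of $\lambda$, and the main term reads $m_\lambda(\pm\rho)^\nu$ when $\lambda=\pm\rho$). Applying Dirichlet's pigeonhole to the eight angular sectors, exactly as in the proof of Lemma~\ref{Dirichlet}, produces within a bounded window past $\nu_1:=\lceil\log(2m)/\log\rho\rceil$ an exponent $\nu$ (taken even if $\lambda=-\rho$) with $\cos(\nu\arg\lambda)\ge 1/\sqrt2$; then $a_\nu\ge\sqrt2\,\rho^{\nu_1}-m\ge 2\sqrt2\,m-m>B$ while $\nu\le m^{1+\epsilon}$ by the previous paragraph. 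The Salem situation, a single real $\lambda>1$, is the subcase $\arg\lambda=0$ and is even simpler (no pigeonhole needed).

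For the general case — when $A$ has some eigenvalue of modulus $>1$ outside $\{\lambda,\bar\lambda\}$ — I would argue by contradiction, assuming $a_\nu\le B$ for every $\nu\le N:=\lfloor m^{1+\epsilon}\rfloor$. Consider the generating function $F(z)=\sum_{\nu\ge1}a_\nu z^\nu=\sum_i \lambda_i z/(1-\lambda_i z)$, a rational function with integer Taylor coefficients, analytic in $|z|<1/\rho(A)$ and with a genuine singularity on the circle $|z|=1/\rho(A)\le 1-\delta/2$. Using the crude bound $|a_\nu|\le m\,\rho(A)^\nu$ together with Parseval's identity $\sum_{\nu\ge1}a_\nu^2 r^{2\nu}=\frac1{2\pi}\int_0^{2\pi}|F(re^{i\theta})|^2\,d\theta$ on a circle of radius $r$ just below $1/\rho(A)$ (with $1/\rho(A)-r$ chosen of order $\delta/N$, so that $N$ stays polynomial), one bounds this quantity from above; on the other hand, the singularity of $F$ just outside $|z|=r$ forces it to be large — provided that singularity is not masked by the other poles $1/\lambda_i$ coming from eigenvalues whose moduli are close to $\rho(A)$. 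Ruling out that masking, i.e.\ showing that the power sums $\sum_i\lambda_i^\nu$ restricted to the near-maximal eigenvalues cannot remain small along a polynomially long stretch of exponents, is where one feeds in Turán-type power-sum inequalities (in the form of Montgomery~\cite{montgomery1994ten}) together with the integrality of the $a_\nu$, the gap $\delta\gtrsim 1/m$ being exactly what keeps the needed Turán window below $m^{1+\epsilon}$. A final averaging/pigeonhole step, in the spirit of Lemma~\ref{Dirichlet}, then upgrades the resulting lower bound on $\max_{\nu\le N}|a_\nu|$ to the one-sided conclusion $a_\nu>B$.

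The step I expect to be the main obstacle is precisely this last one: controlling the mutual cancellation of the contributions $\lambda_i^\nu$ of the eigenvalues of modulus near $\rho(A)$ over a polynomially long range of $\nu$, since a priori the arguments of these (algebraic) eigenvalues could be nearly resonant. Invoking Dobrowolsky rather than merely Kronecker is what makes the exponent budget $m^{1+\epsilon}$ comfortably exceed the inverse spectral gap $\approx m$, giving the Fourier/Turán estimates enough room to detect the singularity of $F$ before the polynomial bound on $\nu$ runs out.
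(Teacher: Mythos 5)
Your algebraic input matches the paper exactly: Dobrowolsky for large degree, Schinzel--Zassenhaus for small degree, giving $\rho(A)\ge 1+\delta$ with $1/\delta$ of order $m(\log m)^3$. But the analytic part diverges from the paper's and, as you yourself flag, does not close. The paper does not split into a ``gapped'' case and a ``general'' case. Instead it normalizes $z_j=\lambda_j/|\lambda_1|$, sets $S_\nu=\sum z_j^\nu$, fixes $K_0\approx 20(B/c)\,m(\log m)^3$ and $K=m^{1+\epsilon}$, and runs a single Fej\'er-kernel argument (Cassel's theorem from Montgomery's book): writing $P(z)=\tfrac12+\sum_{\nu=1}^K(1-\tfrac{\nu}{K+1})z^\nu$, the nonnegativity $\operatorname{Re}P(z)\ge 0$ for $|z|\le1$ and the identity
\[
\sum_{\nu=1}^{K}\Bigl(1-\tfrac{\nu}{K+1}\Bigr)(1+\cos2\pi\nu\theta_1)\operatorname{Re}(S_\nu)
=\sum_{j}\operatorname{Re}\Bigl[P(z_j)+\tfrac12P\bigl(r_je(\theta_j-\theta_1)\bigr)+\tfrac12P\bigl(r_je(\theta_j+\theta_1)\bigr)-1\Bigr]
\]
give the one-sided lower bound $\ge\tfrac{K+1}{4}-m$. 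Subtracting the contribution of $\nu<K_0$ (at most $2BK_0$ under the hypothesis $S_\nu\le B$) produces some $\nu\in[K_0,K]$ with $S_\nu\ge 1/20$, and then $\operatorname{Tr}(A^\nu)=|\lambda_1|^\nu S_\nu\ge(1+\delta)^{K_0}/20>B$. Note the nonnegativity of the Fej\'er kernel is precisely what turns an averaging identity into a \emph{one-sided} bound in a \emph{polynomial} window, with no case analysis needed.

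The gap in your plan is exactly this one-sidedness. Your contradiction hypothesis is $a_\nu\le B$ for $\nu\le N$, which is one-sided: it permits $a_\nu$ arbitrarily negative. Parseval applied to $F(z)=\sum a_\nu z^\nu$ can only bound $\sum a_\nu^2 r^{2\nu}$, hence can only force some $|a_\nu|$ large — which is compatible with $a_\nu\le B$ and produces no contradiction. You propose to recover a one-sided conclusion by ``a final averaging/pigeonhole step in the spirit of Lemma~\ref{Dirichlet},'' but Lemma~\ref{Dirichlet} applied to $m$ near-maximal eigenvalues needs exponents up to $8^m$, which destroys the polynomial budget; that pigeonhole is only used in the paper to handle a finite list of small genera, not as a component of the polynomial bound. (A secondary issue: in your ``gapped'' case, the eight-sector pigeonhole produces a \emph{small} exponent $\nu\le 8$ with $\cos(\nu\arg\lambda)\ge1/\sqrt2$, not one in a bounded window past $\nu_1$; you would need, e.g., pigeonhole on multiples of $\nu_1$ to land in $[\nu_1,8\nu_1]$.) So the sketch as written does not constitute a proof; the missing ingredient is precisely the positivity device — a nonnegative kernel such as Fej\'er's — that gives a one-sided power-sum bound within a window of length $m^{1+\epsilon}$.
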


\begin{prop}
Fix $B>0$ and $\epsilon >0$. There exist $n=n(B,\epsilon)$ such that for any $m \geq n$ and any $A \in SL(m,\mathbb{Z})$ with $\rho(A)=1$ we have the following:\\
There is some $\nu$, $1 \leq \nu \leq m^{2+\epsilon}$ such that 
\[Tr(A^\nu)>B. \]
\label{case2}
\end{prop}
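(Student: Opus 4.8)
Proposition \ref{case2} concerns matrices $A \in SL(m,\mathbb{Z})$ whose spectral radius equals $1$. By Kronecker's theorem, every eigenvalue of such a matrix is a root of unity; indeed the characteristic polynomial is monic of degree $m$ with integer coefficients, and all its roots lie on the unit circle, so each eigenvalue satisfies $\overline{|\lambda|} = 1$ and is a root of unity. Hence there is a common integer $N$ with $A^N$ unipotent, and the eigenvalues $z_1,\dots,z_m$ of $A$ (with multiplicity) are all $N$-th roots of unity. The plan is to show that among the power sums $S_\nu = \sum_j z_j^\nu$ for $1 \le \nu \le m^{2+\epsilon}$, at least one exceeds $B$. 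Note $S_\nu = \mathrm{Tr}(A^\nu)$, and crucially each $S_\nu$ is a rational integer since $A$ is an integer matrix; so it suffices to find $\nu$ in the allowed range with $S_\nu \ge B + 1$, or even just with $S_\nu > B$.

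The key observation is a counting/averaging one. Write $z_j = e^{2\pi i \theta_j}$. For a parameter $\nu$ chosen uniformly at random in $\{1,\dots,T\}$ with $T = \lfloor m^{2+\epsilon}\rfloor$, consider $\frac{1}{T}\sum_{\nu=1}^T S_\nu = \sum_{j=1}^m \frac{1}{T}\sum_{\nu=1}^T z_j^\nu$. For each $j$ with $z_j = 1$ the inner average is exactly $1$; for $z_j \ne 1$ a root of unity, the geometric sum $\sum_{\nu=1}^T z_j^\nu$ has absolute value at most $\frac{2}{|1 - z_j|} = \frac{1}{|\sin(\pi\theta_j)|}$, which is bounded but can be as large as a constant times $N$ when $\theta_j$ is close to $0$. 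So a naive first-moment bound is not immediately enough — the averages of the non-trivial eigenvalue contributions could in principle conspire to be negative of size comparable to $m$. The way around this, which I expect to be the heart of the argument, is to exploit that the $z_j$ come in Galois-conjugate families (the characteristic polynomial factors into cyclotomic polynomials $\Phi_{d}$), and to use the fact that $\sum_{\nu=1}^{T} \mathrm{Tr}(A^\nu)$ can also be read off from $-\log \det(I - tA)$ type generating functions, or more elementarily: partition the eigenvalues by their order $d \mid N$, so $S_\nu = \sum_d c_d \, p_d(\nu)$ where $p_d(\nu) = \sum_{\zeta: \mathrm{ord}(\zeta) = d} \zeta^\nu$ is the Ramanujan sum $c_d(\nu)$ (an integer!), and $c_d \ge 0$ is the multiplicity of $\Phi_d$ in the characteristic polynomial. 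Then choosing $\nu$ divisible by $N$ (e.g. $\nu = N$ itself, if $N \le T$) gives $S_\nu = m$, which beats $B$ immediately once $m \ge B+1$.

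So the real case to handle is when the common order $N$ of the roots of unity is large — larger than $T = \lfloor m^{2+\epsilon}\rfloor$. Here I would invoke the classical bound on the maximal order of an element of $GL(m,\mathbb{Z})$ of finite order on the relevant quotient, or more directly: if $A$ has an eigenvalue that is a primitive $d$-th root of unity then $\varphi(d) \le m$, so every eigenvalue's order $d$ satisfies $\varphi(d) \le m$; the least common multiple $N$ of all such $d$ can still be superpolynomial in $m$, but by a theorem of Landau the maximal $N$ with $\varphi(N) \le m$ grows like $e^{(1+o(1))\sqrt{m \log m}}$ — too big. The fix is not to aim for $\nu$ a multiple of $N$, but of the order $d_0$ of a single judiciously chosen eigenvalue (the one with largest multiplicity $c_{d_0}$), so that $\nu = d_0 \le$ something with $\varphi(d_0) \le m$, hence $d_0 \le$ (Landau bound) — still potentially superpolynomial. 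The genuinely correct route, which I would pursue, is the Dirichlet/pigeonhole approach already foreshadowed by Lemma \ref{Dirichlet}: apply the box argument to the arguments $\theta_j$ directly to find $\nu \le 8^m$... no — that is too weak for the $m^{2+\epsilon}$ bound. Instead I would run a \emph{simultaneous Diophantine approximation} argument: there are at most $m$ distinct eigenvalues, so at most $m$ distinct angles $\theta_j \in [0,1)$; by Dirichlet's theorem on simultaneous approximation there is an integer $\nu$ with $1 \le \nu \le Q^{m}$ and $\|\nu\theta_j\| \le 1/Q$ for all $j$ — but again $Q^m$ is exponential. The resolution must use that the $\theta_j$ are rationals with bounded denominator structure coming from cyclotomic factorization: group eigenvalues by order $d$, there are few possible $d$ (each with $\varphi(d) \le m$, so at most $O(m/\log\log m)$ values of $\varphi^{-1}$), pick $\nu = \mathrm{lcm}$ of the orders of a dominant subfamily whose total multiplicity is $\ge m/2$ and whose orders are all $\le m^{1+\epsilon'}$ (possible by a pigeonhole on $\varphi$-values, since orders with $\varphi(d) \le m$ but $d$ large are sparse), making $S_\nu \ge m/2 - (\text{bounded tail}) > B$. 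I expect reconciling these counting estimates — showing a large-multiplicity subfamily with small lcm of orders always exists, with lcm controlled by $m^{2+\epsilon}$ — to be the main obstacle, and the $2+\epsilon$ exponent (versus $1+\epsilon$ in the positive-spectral-radius case) to be exactly what is needed to absorb the lcm growth.
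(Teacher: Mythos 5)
Your proposal correctly identifies the cyclotomic-polynomial factorization as the starting point, and correctly handles the easy case where the lcm $N$ of all eigenvalue orders is small (take $\nu = N$, getting $\mathrm{Tr}(A^\nu) = m$). But the plan for the hard case has two genuine gaps. First, you propose to take $\nu$ to be the lcm of orders of a ``dominant subfamily'' of eigenvalues with total multiplicity $\ge m/2$, and claim $S_\nu \ge m/2 - (\text{bounded tail})$. The tail is not bounded: the excluded eigenvalues can contribute as much as $-m/2$ to $S_\nu$, leaving you with $S_\nu \ge 0$ at best, which does not beat $B$. Second, even if every order $d$ in your chosen subfamily satisfies $d \le m^{1+\epsilon'}$, their lcm is not controlled by any polynomial in $m$: for instance, if the orders are distinct primes $p$ of size around $\sqrt{m}$, one can fit roughly $\sqrt{m}$ of them subject to $\sum \varphi(p) \le m$, and their lcm is then of size $e^{\Theta(\sqrt{m}\log m)}$. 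You flag this second issue yourself as ``the main obstacle,'' but it is not a gap to be absorbed into the $2+\epsilon$ exponent --- the lcm-of-a-subfamily strategy genuinely fails.

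The paper's argument sidesteps both problems with a simpler and sharper device. Rather than trying to make many eigenvalues simultaneously return to $1$, it isolates a \emph{single} cyclotomic factor $\Phi_{k_l}$, the one of largest index $k_l$. From $\varphi(k_l) \le m$ together with $\varphi(t)/t^{1-\delta}\to\infty$ one gets $k_l < m^{1+\epsilon}$ for $m$ large. Taking $\nu$ a multiple of $k_l$ makes the $\Phi_{k_l}$-eigenvalues contribute exactly $\varphi(k_l)$, which already exceeds $B$ once $k_l > B'$. For the remaining eigenvalues one does not attempt to control them individually: their $k_l$-th powers are the roots of a real (in fact integer) polynomial $g$ of degree $<m$, and Lemma \ref{lastlemma} --- that the power sums $S_1,\dots,S_{n+1}$ of a degree-$n$ real-coefficient polynomial cannot all be negative --- guarantees some $1 \le \nu' \le m$ with nonnegative $\nu'$-th power sum. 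Then $\nu = k_l\nu' \le m^{2+\epsilon}$ and $\mathrm{Tr}(A^\nu) = \varphi(k_l) + (\text{nonnegative}) > B$. The residual case $k_l \le B'$ is handled by $\nu = (B')!$, at which all eigenvalues return to $1$ and $\mathrm{Tr}(A^\nu) = m$. The ingredient you are missing is precisely Lemma \ref{lastlemma}, which eliminates the need to control the tail and renders the lcm difficulty moot.
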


\noindent \textbf{Proof of Proposition \ref{case1}}

\begin{proof}
Recall the following theorems of Dobrowolsky \cite{dobrowolski1979question} and Schinzel-Zassenhaus \cite{schinzel1965refinement}
\newtheorem*{thm:Dobrowolsky}{Theorem \ref{thm:Dobrowolsky}}
\begin{thm:Dobrowolsky}(Dobrowolsky)
Let $\lambda$ be an algebraic integer of degree $d$ and define $\overline{|\lambda|}$ to be the maximum modulus between all Galois conjugates of $\lambda$, including itself. For large enough $d$ if $\lambda$ is not a root of unity then
\[ \overline{|\lambda|} \geq 1+\frac{1}{d} \bigg( \frac{\log\log(d)}{\log(d)} \bigg)^3. \]
\end{thm:Dobrowolsky}

\newtheorem*{thm:Schinzel}{Theorem \ref{thm:Schinzel}}
\begin{thm:Schinzel}(Schinzel-Zassenhaus)
If an algebraic integer $\lambda \neq 0$ is not a root of unity, and if $2s$ among its conjugates have nonzero imaginary part, then
\[ \overline{|\lambda|} > 1+4^{-s-2}. \]
\end{thm:Schinzel}

\noindent The two theorems together imply that there exist a constant $c>0$ such that for all $d$
\[\overline{|\lambda|} \geq 1 + \frac{c}{d\smallskip\log(d)^3} \hspace{10mm}(*) \]
This is because by Dobrowolsky's theorem one can take $c=1$ for large $d$, say for $d \geq M$. For the finitely many remaining values of $2 \leq d < M$ one can take $c = 4^{-M-2}$. Hence, in general $c= \min\left\{1, 4^{-M-2}\right\}$ works.\\
Let $\lambda_1, ... , \lambda_m$ be the eigenvalues of $A$ with $\lambda_1$ having the maximum modulus between them. Therefore $|\lambda_1| > 1$. By the previous discussion we have the following:
 \[ |\lambda_1| \geq 1 + \frac{c}{d\smallskip\log(d)^3} \geq  1 + \frac{c}{m\log(m)^3} \smallskip.\]
Define $z_j = \frac{\lambda_j}{|\lambda_1|}$. Hence $z_1 , ... , z_m$ are complex numbers with $\max |z_j|=1$. Define 
\[ S_{\nu} = z_1^{\nu} +...+ z_m^{\nu}. \]
In particular $S_{\nu}$ is always a real number by Newton identities. Set $K_0 = 20(\dfrac{B}{c}) \left(m \log(m)^3\right)$, where $c$ is the constant in $(*)$. Set $K = m^{1+ \epsilon}$. Note that for $m \gg 0$ we have $K \geq 5(m+2BK_0)$. We consider two cases\\
 
\noindent 1) There exists $1 \leq \nu \leq K_0 $ such that $S_{\nu} > B$. Then 
\[ Tr(A^{\nu}) = |\lambda_1|^{\nu}\smallskip S_{\nu} \geq S_{\nu} >B. \]

\noindent 2) For each $1 \leq \nu \leq K_0$ we have $S_{\nu} \leq B$. The proof in this case follows the lines of the proof of Cassel's theorem \cite{montgomery1994ten}. Let $P(z)= \frac{1}{2}+ \sum_{\nu=1}^{K} (1- \frac{\nu}{K+1})z^{\nu}$. Then $Re(P(z)) \geq 0$ whenever $|z| \leq 1$ by the properties of the Fejer kernel. Let $z_j = r_j e(\theta _j) := r_j e^{2\pi i \theta_j}$. We have the following
 
 \[ \sum_{\nu = 1}^{K} (1-\frac{\nu}{K+1})(1+\cos 2\pi \nu \theta_1)Re(S_{\nu}) =
 \sum_{j=1}^{m} \sum_{\nu=1}^{K}(1-\frac{\nu}{K+1})r_j^{\nu}(1+\cos 2\pi \nu \theta_1)\cos 2 \pi \nu \theta_j 
  \]
  \[ = \sum_{j=1}^{m} Re[P(z_j)+ \frac{1}{2}P(r_j e(\theta_j - \theta _1))+ \frac{1}{2}P(r_j e(\theta_j + \theta_1))-1] \]
  Since $P(r_1)=P(1)=\frac{K+1}{2}$, we obtain that the above is 
  \[ \geq \frac{K+1}{4} - m .\]
  Now we have the following estimate
  \[ \sum_{\nu = K_0}^{K} (1-\frac{\nu}{K+1})(1+\cos 2\pi \nu \theta_1)Re(S_{\nu}) =  \]
  \[ \sum_{\nu = 1}^{K} (1-\frac{\nu}{K+1})(1+\cos 2\pi \nu \theta_1)Re(S_{\nu})- \sum_{\nu = 1}^{K_0-1} (1-\frac{\nu}{K+1})(1+\cos 2\pi \nu \theta_1)Re(S_{\nu})\]
  \[ \geq \frac{K+1}{4} -m - 2BK_0. \]
  On the other hand we have
  \[ \sum_{\nu = K_0}^{K}(1-\frac{\nu}{K+1})(1+\cos 2 \pi \nu \theta_1) \leq K.\]
  Therefore, there exist $K_0 \leq \nu \leq K $ such that 
  \[ S_{\nu} \geq \frac{\frac{K+1}{4}-m-2BK_0}{K} > \frac{1}{4}-\frac{m+2BK_0}{K} \geq \frac{1}{4}-\frac{1}{5}=\frac{1}{20}. \]
  Now using $(*)$, we have
  \[ Tr(A^{\nu})=|\lambda_1|^{\nu} \smallskip S_{\nu} \geq (1+\frac{c}{m\log(m)^3})^{K_0} \times \frac{1}{20}  \]
  \[ \geq (1+\frac{cK_0}{m\log(m)^3}) \times \frac{1}{20} >B. \]
 
 \end{proof}
 
 \begin{remark}
 It follows from the proof that conditional on the Schinzel-Zassenhaus conjecture \cite{schinzel1965refinement}, one can replace the upper bound $m^{1+\epsilon}$ for $\nu$, in Proposition \ref{case1}, by a linear bound (with linear constant just depending on $B$).
 \end{remark}
 
 \newtheorem*{conj:Schinzel}{Conjecture \ref{conj:Schinzel}}
 \begin{conj:Schinzel}(Schinzel-Zassenhaus)
There exists a constant $c>0$ such that for any algebraic integer $\lambda$ of degree $d$ which is not a root of unity we have
\[ \overline{|\lambda|} \geq 1+\frac{c}{d}. \]
 \end{conj:Schinzel}
 
 \noindent We need the next Lemma from \cite{montgomery1994ten} for the proof of Proposition \ref{case2}.

\begin{lem}
Let $z_1 , ... , z_n$ be all the roots of a polynomial with real coefficients. Define
\[S_{\nu} = z_1^{\nu}+ ... + z_n^{\nu} \]
Then $S_{\nu} \geq 0$ for some integer $\nu$ in the range $1 \leq \nu \leq n+1$.
\label{lastlemma}
\end{lem}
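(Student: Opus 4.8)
The plan is to run a pigeonhole argument on the sign of the partial sums $S_\nu$, exploiting that the coefficients of the polynomial are real so that the $z_i$ come in conjugate pairs and hence each $S_\nu$ is real. Write $e_1, \dots, e_n$ for the elementary symmetric functions of $z_1, \dots, z_n$; the Newton--Girard identities give, for $1 \le \nu \le n$,
\[ S_\nu = e_1 S_{\nu-1} - e_2 S_{\nu-2} + \cdots + (-1)^{\nu-1} \nu\, e_\nu, \]
and for $\nu = n+1$ the same recursion holds with the convention $e_{n+1} = 0$. The point is that each $S_\nu$ with $\nu \le n+1$ is an integer-coefficient polynomial expression in $e_1, \dots, e_n$ — but we do not even need integrality here, only that the $S_\nu$ are real and satisfy this linear recursion.

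First I would suppose, for contradiction, that $S_\nu < 0$ for every $\nu$ in the range $1 \le \nu \le n+1$. Consider the power series $\sum_{\nu \ge 1} S_\nu x^\nu$, which equals $\sum_{i=1}^n \frac{z_i x}{1 - z_i x}$, a rational function whose logarithmic-derivative form is
\[ \sum_{\nu \ge 1} S_\nu x^\nu = -x \frac{Q'(x)}{Q(x)}, \qquad Q(x) := \prod_{i=1}^n (1 - z_i x), \]
so that $Q(x)$ has real coefficients and $Q(0) = 1$. The negativity hypothesis on $S_1, \dots, S_{n+1}$ should be leveraged against the identity that relates these $n+1$ power sums to the $n$ coefficients of $Q$; since $Q$ has degree at most $n$, its $n+1$ ``sign constraints'' on $S_1, \dots, S_{n+1}$ are one more than the number of free coefficients, which is where the contradiction must come from.

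The cleanest route is probably to argue directly from the Newton recursion by induction on $\nu$: assuming $S_1, \dots, S_{\nu-1} < 0$, show the recursion forces a definite sign on the combination $e_\nu$, and track these signs up to $\nu = n+1$, where $e_{n+1} = 0$ yields $S_{n+1}$ as a fixed-sign combination of strictly negative quantities that cannot itself be negative — contradiction. Concretely, one expects to prove by induction that $(-1)^\nu e_\nu \ge 0$ (or a similar alternating-sign statement) for $1 \le \nu \le n$, and then the vanishing of $e_{n+1}$ together with $S_{n+1} = e_1 S_n - e_2 S_{n-1} + \cdots + (-1)^n e_n S_1$ gives $S_{n+1} \ge 0$.

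The main obstacle is getting the sign bookkeeping in the Newton identities exactly right — the alternating signs in both the recursion and the inductive hypothesis interact delicately, and one must be careful that the base case ($\nu = 1$, where $S_1 = e_1$) and the terminal case ($\nu = n+1$, where the $\nu\, e_\nu$ term drops out) both cooperate. An alternative that sidesteps some of this is to invoke a Turán-type or Descartes-rule argument on $Q(x)$ for $x > 0$: if all $S_\nu < 0$ for small $\nu$ then $-xQ'(x)/Q(x) < 0$ near $0$, i.e.\ $Q$ is increasing, and one pushes this to derive a contradiction with $Q$ having the prescribed degree; but the self-contained inductive argument above is likely shortest and is the one I would write up.
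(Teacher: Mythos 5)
Your inductive plan---assume all $S_\nu<0$, deduce the alternating-sign pattern $(-1)^\nu e_\nu>0$ from the Newton--Girard identities, then read off $S_{n+1}>0$ from the degree-$n$ recursion with $e_{n+1}=0$---is exactly the paper's proof; the only slip is the sign on the terminal term, which should be $(-1)^{n-1}e_nS_1$ rather than $(-1)^n e_nS_1$, and this does not affect the conclusion. The generating-function digression is unnecessary, and the "sign bookkeeping" you flag as the main obstacle is precisely what the paper carries out and works cleanly in the way you anticipate.
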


\begin{proof}
We closely follow the proof from \cite{montgomery1994ten}. Let $\sigma_j$ be the $j$-th elementary symmetric function of $z_1 , ... , z_n$. Therefore, $\sigma_j$ is real for each $1 \leq j \leq n$. Recall the Newton-Girard identities
\[ r \sigma_r = \sum_{\nu=1}^{r}(-1)^{\nu -1}\sigma_{r-\nu} S_{\nu} \] 
for $1 \leq r \leq n$. Suppose that $S_{\nu}<0$ for $1 \leq \nu \leq n$. Using Newton-Girard identities and induction we deduce that $(-1)^j \sigma_j >0$ for $1 \leq j \leq n$. On the other hand, another set of Newton-Girard identities state that 
\[ S_{t+n+1} = \sum_{\nu=t+1}^{t+n}S_{\nu}\medskip(-1)^{t+n-\nu}\sigma_{t+n+1-\nu} \] 
for $t \geq 0$. Putting $t=0$ we see that $S_{\nu}<0$ and $(-1)^{n-\nu}\sigma_{n+1-\nu}<0$ for $1 \leq \nu \leq n$, therefore all summands on the right hand side are positive. Hence $S_{\nu +1}>0$. \\
\end{proof}

\noindent \textbf{Proof of Proposition \ref{case2}}

\begin{proof}
Let $Q(z)$ be the characteristic polynomial of $A$. By the assumption, all roots of $Q$ have absolute value at most one. Recall the following theorem of Kronecker:

\noindent Let $f$ be a monic polynomial with integer coefficients in $z$. If all roots of $f$ have absolute value at most $1$ then $f$ is a product of cyclotomic polynomials and/or a power of $z$.\\
Here, there can not be any power of $z$, since $Q(0)=\det(A)=1$. So we can write $Q$ as
\[ Q(z)=\prod_{j=1}^{l} \Phi_{k_j}(z) \]
where $\Phi_{k_j}(z)$ is the $k_j$-th cyclotomic polynomial and $k_1 \leq k_2 \leq ... \leq k_l$ are natural numbers. In particular, by comparing the degrees we deduce that 
\[ \varphi(k_1)+ ... + \varphi(k_l)=m \]
where $\varphi$ is the Euler totient function. Take $B'$ such that for each $t>B'$ we have $\varphi(t)>B$. This is possible since $\lim _{t \rightarrow \infty} \varphi (t)=\infty$. In fact more is true. For any $\delta >0$ we have (see \cite{hardy1979introduction} Theorem 327) 

\[ \lim_{t \rightarrow \infty} \frac{\varphi(t)}{t^{1-\delta}}=\infty. \]

\noindent Firstly, we specify how large $m$ should be. We require that $m > (B')!$. Moreover assume that $m$ is large enough so that for each $t \geq m^{1+\epsilon}$ we have $\varphi(t) > m+B$. Note that $k_l < m^{1+ \epsilon}$ since $\varphi(k_l) \leq m$. We consider two cases\\

\noindent 1) $B' < k_l < m^{1+\epsilon}$. Let $g(z)$ be the polynomial whose roots are the $k_l$-powers of the roots of $\frac{Q(z)}{\Phi_{k_l}(z)}$ allowing repetitions. Hence, $g$ has integer coefficients by Newton's identities and $\deg(g) < m$. Let $\nu = k_l \cdot \nu' $ where $1 \leq \nu ' \leq m$ is chosen such that the sum, $S$, of the $\nu'$-powers of the roots of $g$ is non-negative. Such a $\nu '$ exists by Lemma \ref{lastlemma}.

\noindent Since $\varphi (k_l) > B$ we have the following
\[ Tr(A^{\nu}) = \varphi(k_l)+ S > B. \]
\noindent Note that $\nu = k_l \cdot \nu'$ is at most $ m^{1+\epsilon} \cdot m = m^{2+\epsilon}. $\\

\noindent 2) $k_l \leq B'$. Take $\nu = (B')!$. Then 
\[ Tr(A^{\nu}) = m > B. \]
 This completes the proof.
\end{proof}

\begin{conj}
There is a constant $C>0$ such that for all $g \geq 2$ and $n \geq 0$ we have the following:
\[ \l_{g,n} \geq \frac{C}{g} \medskip \frac{\log(|\chi(S)|)}{|\chi(S)|}.  \]
\end{conj}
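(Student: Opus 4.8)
The plan is to reduce the conjecture to a linear strengthening of Proposition \ref{main} and then to treat the two regimes $\rho(A)>1$ and $\rho(A)=1$ separately. Inspecting the proof of Theorem \ref{maintheorem}, the only source of the exponent $2+\alpha$ (rather than $1$) is the bound $\nu\le m^{2+\epsilon}$ in Proposition \ref{main}. So it suffices to prove: \emph{for every $B>0$ there is a constant $C=C(B)$ such that every $A\in SL(m,\mathbb{Z})$ has a $\nu$ with $1\le\nu\le C(B)\,m$ and $Tr(A^{\nu})>B$}. Feeding this into the argument of Theorem \ref{maintheorem} with $B=2$ and $m=2g$ gives $\log\lambda(f)\ge \frac{1}{2C(2)\,g}\cdot\frac{\log(3|\chi(S)|)}{6|\chi(S)|}$, the finitely many small genera being handled as before via Lemma \ref{Dirichlet}.

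For the regime $\rho(A)>1$ the desired linear bound is, conditionally, already within reach: as the Remark after Proposition \ref{case1} points out, replacing the input $(*)$ by the Schinzel--Zassenhaus inequality $\overline{|\lambda|}\ge 1+c/d$ (Conjecture \ref{conj:Schinzel}) and running the proof of Proposition \ref{case1} with $K_0\asymp Bm$ and $K\asymp m$ produces a $\nu\le K$ with $Tr(A^{\nu})>B$. So conditionally on Schinzel--Zassenhaus this regime is settled, and unconditionally one still obtains $\nu=O(m^{1+\epsilon})$.

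The real work is the regime $\rho(A)=1$, i.e. Proposition \ref{case2}, where the present argument loses a whole factor of $m$ (one factor from $k_l\le m^{1+\epsilon}$, one from $\nu'\le m$ in Lemma \ref{lastlemma}). Here Kronecker's theorem writes the characteristic polynomial as $\prod_{j=1}^{l}\Phi_{k_j}(z)$ with $\sum_j\varphi(k_j)=m$, and the trace becomes a sum of Ramanujan sums
\[ Tr(A^{\nu})=\sum_{j=1}^{l}c_{k_j}(\nu),\qquad c_k(\nu)=\mu\!\left(\frac{k}{\gcd(k,\nu)}\right)\frac{\varphi(k)}{\varphi\big(k/\gcd(k,\nu)\big)},\qquad |c_k(\nu)|\le\gcd(k,\nu). \]
The idea would be to choose $\nu=O(m)$ divisible by ``enough'' of the $k_j$ so that the resonant terms $c_{k_j}(\nu)=\varphi(k_j)$ accumulate past $B$ while the non-resonant terms, controlled by $|c_k(\nu)|\le\gcd(k,\nu)$, do not cancel them; natural candidates are $\nu=\mathrm{lcm}(1,\dots,T)$ for a constant threshold $T=T(B)$, or a short arithmetic progression of moduli over which one averages.

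I expect the main obstacle to be the case in which \emph{all} the $k_j$ are large. Since $\varphi(k)$ can be as small as $\asymp k/\log\log k$, the $k_j$ may run up to $\asymp m\log\log m$, so the least common multiple of even a bounded number of them already overshoots $Cm$, and the prime factorizations of the $k_j$ can be arranged so that no single $\nu\le Cm$ is resonant with a positive proportion of them without being resonant with enough others to spoil the sum. It seems plausible that naive lcm/averaging arguments only reach $\nu=O\big(m(\log\log m)^{O(1)}\big)$ --- which would already improve Theorem \ref{maintheorem} to a bound of shape $\frac{C}{g}\,\frac{\log|\chi(S)|}{|\chi(S)|}$ up to a $\log\log$ factor --- and that removing this last slack, to genuinely linear $\nu$, requires a more delicate argument about the joint divisibility structure of the $k_j$. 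A resolution of the $\rho(A)=1$ case with linear $\nu$, together with Schinzel--Zassenhaus for the $\rho(A)>1$ case, would then yield the conjecture.
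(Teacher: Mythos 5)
This statement is explicitly labelled a \emph{conjecture} in the paper, and the paper does not prove it; after stating it the author writes only that it ``seems plausible'' to prove it by strengthening Propositions~\ref{case1} and~\ref{case2}, that Proposition~\ref{case1} would become linear conditional on Schinzel--Zassenhaus, and that ``we expect a similar linear bound to be true in Proposition~\ref{case2}, however we do not know how to prove it.'' Your proposal does not prove the conjecture either --- and to your credit, you say so plainly. What you have written is an accurate reconstruction of the paper's own assessment of where the difficulty lies: linearizing the $\rho(A)>1$ case is within reach (conditionally on Schinzel--Zassenhaus, exactly as the Remark after Proposition~\ref{case1} notes), and the genuine obstacle is obtaining a linear bound $\nu=O(m)$ in the $\rho(A)=1$ case. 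You also correctly diagnose that Proposition~\ref{case2}'s proof loses a factor of $m$ twice, once from $k_l\le m^{1+\epsilon}$ and once from $\nu'\le m$ in Lemma~\ref{lastlemma}.

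Where you go beyond the paper is in making the $\rho(A)=1$ obstruction concrete via Ramanujan sums: writing $Tr(A^\nu)=\sum_j c_{k_j}(\nu)$ with H\"older's formula and the bound $|c_k(\nu)|\le\gcd(k,\nu)$ is correct and is a sensible framework in which to think about which $\nu$ produce a large positive trace. Your concern that the $k_j$ can all be as large as $\asymp m\log\log m$ (since $\varphi(k)$ can be that much smaller than $k$) is the right worry, and it does show why naive choices like $\nu=\mathrm{lcm}(1,\dots,T)$ cannot work as stated. The heuristic that an averaging argument might plausibly reach $\nu=O(m(\log\log m)^{O(1)})$ is a reasonable guess but is, as you acknowledge, not established. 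In short: your proposal is an honest and essentially correct map of the open problem, matching the paper's own informal remarks and adding a useful Ramanujan-sum reformulation, but it contains no new proof --- appropriately, since none is known.
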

\noindent It seems plausible to prove the above conjecture by improving Propositions \ref{case1} and \ref{case2}. As mentioned previously in a remark, the bound in Proposition \ref{case1} can be replaced by a linear bound conditional on the Schinzel-Zassenhaus conjecture. We expect a similar linear bound to be true in Proposition \ref{case2}, however we do not know how to prove it.

\appendix

\section{Extending the upper bound}
Recall that Tsai has proved the following upper bound for $l_{g,n}$: There is a constant $C>0$ such that for any $g \geq 2$ and any $n \geq 12g+7$, we have (see \cite{tsai2009asymptotic}, page 2271): 
\[ \l_{g,n} \leq C \medskip g \smallskip \frac{\log|\chi(S)|}{|\chi(S)|}. \]
In this appendix, we extend this result to the whole range $g \geq 2$ and $n\geq 0$. It is enough to prove a similar result for $n \leq 12g+6$. The proof in this case is straightforward and we bring it for the sake of completeness.\\ 
For functions $f$ and $g$, we use the notation $f \asymp g$ when there exists a positive constant $K$ such that:
\[ \frac{1}{K} \smallskip f < g < K \smallskip f. \]
Therefore in the range $n \leq 12g+6$, we have:
\[ |\chi(S)| \asymp g \]
Hence, the desired upper bound is equivalent to the following: There exist a contact $M$ such that 
\[ \l_{g,n} \leq M \smallskip \log(g) \]
To prove the last inequality, it is enough to find a constant $M>0$, independent of $g$ and $n$, such that there always exsits a pseudo-Anosov map $f : S_{g,n} \longrightarrow S_{g,n}$ with $\lambda(f) \leq g^M$.\\ 
Consider the following set of Penner curves $\mathcal{A}= \{ a_1 , \dots , a_k \}$ (red curves) and $\mathcal{B}= \{ b_1 , \dots , b_{\ell} \}$ (blue curves) where $k = g+1$ and $\ell = g+n-1$ (Figure \ref{upperboundexample}). Let $\tau_r$ be the composition of positive Dehn twists along red curves and $\tau_b$ be the composition of positive Dehn twists along blue curves. Note that the order of composition in $\tau_{r}$ (respectively $\tau_b$) is not important since they commute. Define 
\[ f := \tau_r \circ (\tau_b)^{-1}. \] 
Since $\mathcal{A} \cup \mathcal{B}$ fills the surface and each complementary region is a disk or a once punctured disk, therefore the conditions of Penner's construction are satisfied. By Penner's construction, the map $f$ is pseudo-Anosov and a train track $\tau$ can be obtained from the union $\mathcal{A} \cup \mathcal{B}$ by smoothing the intersection points in the proper way. Let $V$ be the vector space spanned by transverse measures on $\tau$. Define $H$ as the linear subspace of $V$ spanned by transverse measures supported on only one of the curves in $\mathcal{A} \cup \mathcal{B}$. The vector space $H$ is invariant under the action of $f$ and the stretch factor of $f$ is equal to the spectral radius of the induced action on $H$. The induced action on $H$ can be represented by a $(k+\ell) \times (k+\ell)$ non-negative integral matrix $A$. By our specific construction of $f$, each entry of $A$ is bounded above by $\max \{3n+4,7 \} $. Since the spectral radius of a non-negative matrix is bounded above by the maximum row sum, we have: 
\[ \lambda(f) \leq (k+\ell) \times \max \{ 3n+4 , 7 \} \leq (14g+6)(36g+22) \leq g^M \]
for some $M>>0$.

\begin{figure}
\labellist
\pinlabel $a_1$ at 18 52
\pinlabel $a_2$ at 76 52
\pinlabel $a_3$ at 134 52
\pinlabel $a_4$ at 196 52
\pinlabel $a_5$ at 292 52
\pinlabel $b_1$ at 50 78
\pinlabel $b_2$ at 108 78
\pinlabel $b_3$ at 166 78
\pinlabel $b_4$ at 215 90
\pinlabel $b_5$ at 230 80
\pinlabel $b_6$ at 245 70
\endlabellist
\centering 
\includegraphics[width= 3.5 in]{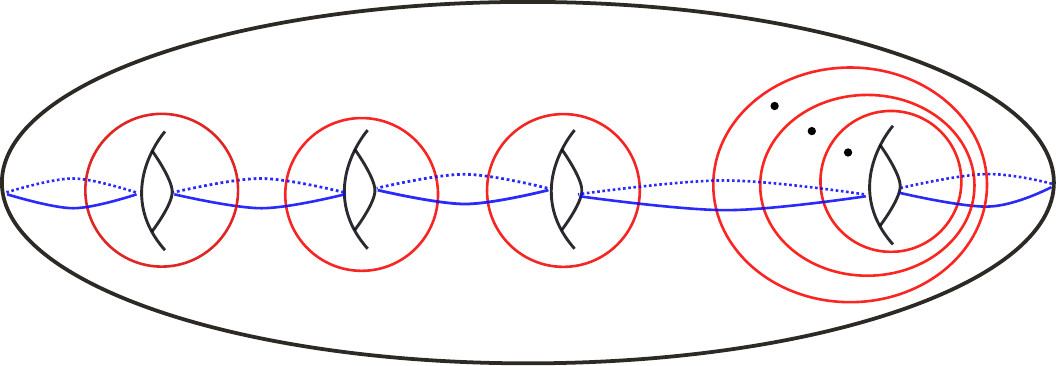}
\caption{Penner curves for the map $f : S_{4,3} \longrightarrow S_{4,3}$}
\label{upperboundexample}
\end{figure}

\bibliographystyle{plain}
\bibliography{references}

\end{document}